\def\marginpar#1{\ignorespaces}
\DeclareMathOperator\argmin{argmin}
\DeclareMathOperator\Var{Var}
\DeclareMathOperator\rank{rank}
\DeclareMathOperator\sppa{span}
\newtheorem{theorem}{Theorem}[section]
\newtheorem{lemma}[theorem]{Lemma}
\newtheorem{proposition}[theorem]{Proposition}
\newtheorem{definition}[theorem]{Definition}
\newtheorem{assumpt}[theorem]{Assumption}
\numberwithin{equation}{section}
\begin{document}
\title[Existence of GLM MLE]{TThe Existence of Maximum Likelihood Estimate in 
High-Dimensional Binary Response Generalized Linear Models}

\author{Wenpin Tang}
\address{Department of Industrial Engineering and Operations Research, Columbia University.}
\email{wt2319@columbia.edu}

\author{Yuting Ye}
\address{Division of Biostatistics, UC Berkeley.}
\email{yeyt@berkeley.edu}

\date{\today} 
\begin{abstract}
Motivated by recent works on the high-dimensional logistic regression,
we establish that the existence of the maximum likelihood estimate exhibits a phase transition for a wide range of generalized linear models with binary outcome and elliptical covariates.
This extends a previous result of Cand\`es and Sur who proved the phase transition for the logistic regression with Gaussian covariates.
Our result reveals a rich structure in the phase transition phenomenon, which is simply overlooked by Gaussianity.
The main tools for deriving the result are data separation, convex geometry and stochastic approximation.
We also conduct simulation studies to corroborate our theoretical findings, and explore other features of the problem.
\end{abstract}

\maketitle

\section{Introduction}
In this paper, we are concerned with the maximum likelihood estimate of generalized linear models \cite{MN89, NW72} with binary outcome.
More precisely, we consider $n$ independent and identically distributed observations $(\pmb{x}_i, y_i)$, $i = 1, \ldots, n$, where the binary outcome $y_i \in \{-1,1\}$ is connected to the covariates $\pmb{x}_i \in \mathbb{R}^p$ by the probability model 
\begin{equation}
\label{eq:GLM}
\mathbb{P}(y_i = 1 | \pmb{x}_i) = \sigma(\beta_0 + \pmb{x}_i^T \pmb{\beta}).
\end{equation}
Here $\sigma: \mathbb{R} \rightarrow [0,1]$ is the {\em inverse link function}, and $(\beta_0, \pmb{\beta}) \in \mathbb{R}^{p+1}$ are unknown parameters of the model. 
Popular choices for $\sigma$ are
\begin{itemize}
\item
{\em logit link function} \cite{Ber44}: $\sigma(t) =e^t/(1+e^t)$.
\item
{\em probit link function} \cite{Bliss35}: $\sigma(t) = \Phi(t)$, where $\Phi(t): = \frac{1}{\sqrt{2 \pi}} \int_{-\infty}^t \exp(-s^2/2)ds$.
\item
{\em cloglog link function} \cite{Fisher}: $\sigma(t) = 1 - e^{-e^t}$.
\end{itemize}
The maximum likelihood estimate of $(\beta_0, \pmb{\beta})$ is any maximizer of the log-likelihood 
\begin{equation}
\label{eq:loglike}
\ell (\beta_0, \pmb{\beta}): = \sum_{y_i = 1} \log(\sigma(\beta_0 + \pmb{x}_i^T \pmb{\beta})) + \sum_{y_i = -1} \log(1- \sigma(\beta_0 + \pmb{x}_i^T \pmb{\beta})).
\end{equation}

In contrast to the maximum likelihood estimate of linear models, that of generalized linear models does not always exist. 
This phenomenon is closely related to the separability of observed data, see Section \ref{sc:existence} for a review.
Classical theory deals with this issue when the number of covariates $p$ is fixed, and the number of observed data $n$ tends to infinity.
In the era of data deluge, we are often in a situation where the number of covariates $p$ and the number of observations $n$ are comparable in size.
The problems of interest are in high-dimensional asymptotics, in which case the number of parameters $p$ and the number of observations $n$ both tend to infinity, at the same rate.

In a series of papers \cite{CS18, CS218, SCC17}, Sur, Chen and Cand\`es developed a theory for the logistic regression with Gaussian covariates in high-dimensional regimes.
They studied the asymptotic properties of the maximum likelihood estimate when $p/n \rightarrow \kappa$, with applications in hypothesis testing.
Cand\`es and Sur \cite{CS18} proved a phase transition for the existence of the maximum likelihood estimate in high-dimensional logistic regression with Gaussian covariates.
This extends an earlier result of Cover \cite{Cover65} in the context of information theory.
Formally, there exists a threshold $h_{\text{MLE}}$, depending on the parameters of the model, such that
\begin{itemize}
\item
if $\kappa > h_{\text{MLE}}$, then $\mathbb{P}({\mbox{maximum likelihood estimate exists}}) \rightarrow 0$ as $n, p \rightarrow \infty$.
\item
if $\kappa < h_{\text{MLE}}$, then $\mathbb{P}({\mbox{maximum likelihood estimate exists}}) \rightarrow 1$ as $n, p \rightarrow \infty$.
\end{itemize}
This phenomenon is referred to as the {\em phase transition} for the existence of the maximum likelihood estimate. 
The latter is crucial to justify the use of large sample approximations to numerous measures of goodness-of-fit, and derive the limiting distribution of the likelihood ratio, as mentioned in \cite{CS18}. 
But they only studied the existence of the maximum likelihood estimate for Gaussian covariates.
This is rarely the case in reality. 
For instance, the covariates are often heavy-tail distributed in financial problems where $p$ and $n$ are large.

The purpose of this paper is to further generalize the results of \cite{CS18}, proving the phase transition for a large class of generalized linear models with elliptical covariates. 
Here we consider a large number of covariates sampled from elliptical distributions, and predict whether one can expect the maximum likelihood estimate to be found or not. 
Elliptical symmetry is a natural generalization of multivariate normality.
The contribution of this paper is twofold.
\begin{itemize}[itemsep = 4pt]
\item \textbf{Theoretical justifications.} 
We give a universal threshold on $p/n$ for the existence of the maximum likelihood estimate in the binary classification. 
Here the word 'universal' refers to a wide class of link functions and covariate distributions. 
Our work aims to explore to which extent the phase transition occurs in terms of link functions and covariates, including the logit link and Gaussian covariates as a special case. 
We notice that the projection limit assumption (Assumption \ref{assumpt4}) is essential to our result, which is disguised for the special choice of Gaussian covariates. 
This is analogous to regularity assumptions in high-dimensional signal processing \cite{BM11, EK18}.
Without this assumption, the phase transition formula may fail (for example, log-normal covariates).

\item \textbf{Novel techniques.} 
While the high level idea is the same as \cite{CS18}, we bring a few techniques into this field.
First, we give a checkable condition to the projection limit assumption. 
Second, we use a stochastic approximation approach (Theorem \ref{thm:2}) to prove the phase transition formula. 
Compared to the bare-hands analysis in \cite{CS18}, our argument is more general and is easily adapted to other problems of interest. 
Finally, we provide fairly general conditions (for example, \eqref{eq:pG}) under which the phase transition occurs.
These conditions reveal additional structures which are masked by Gaussianity.
\end{itemize}

In a recent paper of Montanari, Ruan,  Sohn and Yan \cite{MR19}, they considered the max-margin classifier of the random feature problem in the high-dimensional regime.
They provided a phase transition threshold for the existence of the max-margin classifier, and further studied the limiting max-margin classifier. 
But similar to \cite{CS18}, they assume that the covariates are Gaussian.
De Loera and Hogan \cite{LH19} considered the maximum likelihood estimate of a multi-class logistic regression in a different way. 
They explored a condition on the number of observed data and the number of classes such that the maximum likelihood estimate exists.
We hope that our work will trigger further research towards a theory for multi-class classification models with non-Gaussian covariates.

The rest of the paper is organized as follows. 
In Section \ref{s2}, we provide the background and state the main result, Theorem \ref{thm:main}.
In Section \ref{s3}, we perform simulations to corroborate our theoretical findings. 
The proof of Theorem \ref{thm:main} is given in Section \ref{s4}. In Section \ref{s5}, we conclude with some insights and directions for future work.
%


\section{Background and Main Result}
\label{s2}
In this section we provide the background on the existence of the maximum likelihood estimate in binary response generalized linear models, and the properties of elliptical distributions.
Then we present the main result, Theorem \ref{thm:main}.
\subsection{Existence of the maximum likelihood estimate and Data Geometry}
\label{sc:existence}

Often the maximum likelihood estimate in the logit model, implemented in many statistical packages, runs smoothly.
But sometimes it fails, even when the number of covariates $p$ is much smaller than the sample size $n$.
One reason for this undesirable phenomenon is that the maximum likelihood estimate does not exist.
It is a classical problem in statistics to characterize the existence and uniqueness of the maximum likelihood estimate in generalized linear models.

Historically, Haberman \cite{Haber77} and Weddenburn \cite{W76} provided general criteria for the maximum likelihood estimate to exist.
Silvapulle \cite{Sil81}, and Albert and Anderson \cite{AA84} gave conditions for the existence of the maximum likelihood estimate in logistic regression via data geometry.
More precisely, they classified the data into the following three categories:

\begin{itemize}
\item
The data points $(\pmb{x}_i, y_i)$ are said to be {\em completely separated} if there exists $\pmb{b} \in \mathbb{R}^p$ such that 
$y_i \pmb{x}_i^T \pmb{b} > 0$ for all $i$.
\item
The data points $(\pmb{x}_i, y_i)$ are said to be {\em quasi-completely separated} if for each $\pmb{b} \ne \pmb{0}$, $y_i \pmb{x}_i^T \pmb{b}  \ge 0$ for all $i$, and equality holds for some $i$.
\item
The data points $(\pmb{x}_i, y_i)$ are said to {\em overlap} if for each $\pmb{b} \ne \pmb{0}$, there exists one $i$ such that $y_i \pmb{x}_i^T \pmb{b}  > 0$, and another $i$ such that $y_i \pmb{x}_i^T \pmb{b}  < 0$.
\end{itemize}

In \cite{AA84}, it was proved that the maximum likelihood estimate exists in logistic regression if and only if the data points overlap. 
See also \cite{SD86} for a generalization. 
Later Lesaffre and Kaufmann \cite{LK92} proposed a necessary and sufficient condition for the existence of the maximum likelihood estimate in probit regression, which coincides with that derived in \cite{AA84} for logistic regression.
In fact, their result holds for a general class of generalized linear models.

\begin{theorem}
\label{thm:LK}
Consider the generalized linear model defined by \eqref{eq:GLM},
and assume that $\sigma(\cdot)$ and $1 - \sigma(\cdot)$ are log-concave.
Then the maximum likelihood estimate exists if and only if the data points overlap.
\end{theorem}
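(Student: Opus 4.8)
The plan is to exploit the concavity of the log-likelihood and to reduce the whole question to one property of a single recession cone determined by the data geometry. First I would set $g:=\log\sigma$ and $h:=\log(1-\sigma)$; by hypothesis both are concave, and since the inverse link $\sigma$ is a strictly increasing distribution function with $\sigma(-\infty)=0$ and $\sigma(+\infty)=1$, the function $g$ is strictly increasing to $0$ while $h$ is strictly decreasing to $0$. Writing the linear predictor $\eta_i(\pmb\theta)=\beta_0+\pmb x_i^T\pmb\beta$ as an affine function of $\pmb\theta=(\beta_0,\pmb\beta)$, the log-likelihood $\ell(\pmb\theta)=\sum_{y_i=1}g(\eta_i(\pmb\theta))+\sum_{y_i=-1}h(\eta_i(\pmb\theta))$ is concave, and every summand is strictly negative, so $\ell<0$ and $\sup\ell\le 0$. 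Hence the MLE exists precisely when this supremum is attained. Throughout I read the separation conditions with the intercept included (i.e.\ with the augmented vectors $(1,\pmb x_i)$) and assume the augmented design has full column rank, which excludes degenerate collinear directions and makes the trichotomy genuinely exhaustive.

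The key object is the cone $K:=\{\pmb d=(d_0,\pmb d_\beta)\neq\pmb 0:\ y_i(d_0+\pmb x_i^T\pmb d_\beta)\ge 0\ \text{for all } i\}\cup\{\pmb 0\}$. A direct check shows that the data overlap if and only if $K=\{\pmb 0\}$: a nonzero $\pmb d\in K$ would leave no index with a strictly negative sign, contradicting overlap, and conversely a failure of overlap furnishes, after a sign flip, exactly such a $\pmb d$. The mechanism behind everything is that along a ray $\pmb\theta_0+t\pmb d$ the $i$-th summand increases to its supremum $0$ when $y_i(d_0+\pmb x_i^T\pmb d_\beta)>0$, stays constant when it equals $0$, and decreases to $-\infty$ when it is $<0$; this is precisely where strict monotonicity of $\sigma$ enters.

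For the implication \emph{overlap $\Rightarrow$ MLE exists} I would prove that $\ell$ is coercive, i.e.\ $\ell(\pmb\theta)\to-\infty$ as $\|\pmb\theta\|\to\infty$. Arguing by contradiction, a sequence $\pmb\theta_n$ with $\|\pmb\theta_n\|\to\infty$ along which $\ell$ stays bounded below yields, after normalising and passing to a convergent subsequence of directions $\pmb u_n\to\pmb u$ with $\|\pmb u\|=1$, a limit $\pmb u\notin K$ (since $K=\{\pmb 0\}$). Thus some index $j$ has $y_j(u_0+\pmb x_j^T\pmb u_\beta)<0$, and by continuity the same holds with a uniform gap along the tail, so the $j$-th summand tends to $-\infty$ while all others remain $\le 0$, forcing $\ell(\pmb\theta_n)\to-\infty$, a contradiction. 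Coercivity together with continuity then produces a global maximiser on a compact super-level set (and concavity makes the arg-max convex). For the converse \emph{not overlap $\Rightarrow$ MLE does not exist} I would pick $\pmb d^*\in K\setminus\{\pmb 0\}$; full rank guarantees that the numbers $y_i(d_0^*+\pmb x_i^T\pmb d^*_\beta)$ are not all zero, so $t\mapsto\ell(\pmb\theta+t\pmb d^*)$ is strictly increasing from every base point $\pmb\theta$. Hence no point is a maximiser: in the completely separated case every summand climbs to $0$ so $\sup\ell=0$ is approached but never reached, and in the quasi-completely separated case the strict ascent already rules out attainment.

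I expect the main obstacle to be the coercivity step: upgrading the pointwise statement ``each ray eventually decreases'' to a uniform blow-up requires the compactness argument on the unit sphere of directions, together with the fact that a single diverging summand dominates because all other summands are bounded above by $0$. The secondary subtlety is the bookkeeping around the intercept and the full-rank hypothesis, which is exactly what separates the quasi-complete case (where one strictly positive sign forces strict ascent) from a genuinely null direction along which $\ell$ is constant and the stated equivalence would otherwise degenerate.
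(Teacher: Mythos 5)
Your argument is correct, but note that the paper does not actually prove Theorem \ref{thm:LK}: it is quoted as a classical result of Albert--Anderson and Lesaffre--Kaufmann, so there is no in-paper proof to compare against. Your route --- identify overlap with triviality of the recession cone $K$ of directions $\pmb d$ satisfying $y_i(d_0+\pmb x_i^T\pmb d_\beta)\ge 0$, prove coercivity of $\ell$ by compactness on the unit sphere of directions when $K=\{\pmb 0\}$, and exhibit a strictly ascending ray when $K\neq\{\pmb 0\}$ --- is the standard and correct way to establish the equivalence, and the two subtleties you flag (uniformity in the coercivity step, and the full-rank/intercept bookkeeping that separates quasi-complete separation from a genuinely null direction) are exactly the right ones. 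One honest caveat, which you partly acknowledge: the theorem as stated in the paper assumes only log-concavity of $\sigma$ and $1-\sigma$, whereas your proof additionally uses that $\sigma$ is strictly increasing with $\sigma(t)\to 0$ as $t\to-\infty$, $\sigma(t)\to 1$ as $t\to+\infty$, and $0<\sigma<1$ everywhere; these are not consequences of log-concavity (a constant $\sigma\equiv 1/2$ is a counterexample to the literal statement) but are implicit in the cited sources and satisfied by all links considered in the paper. It is also worth observing that, as your proof makes clear, log-concavity itself is used only to get concavity of $\ell$ (hence convexity of the arg-max set); existence versus non-existence is driven entirely by the monotonicity and tail behavior of $\sigma$ together with the sign pattern of the data.
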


It is easily seen that the logit, probit and cloglog links all satisfy the log-concavity. 
Despite this nice characterization, it is not clear how to check these criteria efficiently. 
See also \cite{CA01, Dem01, KK07} for algorithmic aspects for detecting separation/overlaps.
\subsection{Elliptical Distributions}
\label{sc:ED}
Elliptical distributions are natural generalizations of multivariate normal, which preserve spherical symmetry.
In the sequel, $\mathbb{S}^{p-1}$ denotes the unit sphere in $\mathbb{R}^p$.
The following definition of {\em elliptical distributions} is due to Kelker \cite{Kelker70}, and Cambanis, Huang and Simons \cite{CHS81}.

\begin{definition}
A random vector $\pmb{X} := (X_1, \ldots, X_p) \in \mathbb{R}^p$ is elliptically contoured, or simply elliptical if 
\begin{equation}
  \pmb{X} \stackrel{(d)}{=} \pmb{\mu} + R \, \pmb{A} \, \pmb{U},
  \label{eq:elliptical_dist}
\end{equation}
where $\pmb{\mu} \in \mathbb{R}^p$, $\pmb{A} \in \mathbb{R}^{p \times r}$, $\pmb{U}$ is uniformly distributed on $\mathbb{S}^{r-1}$ for some $r  > 0$, and $R$ is a non-negative random variable independent of $\pmb{U}$.
Write $\pmb{X} \sim \mathcal{E}_p(\pmb{\mu}, \pmb{\Sigma}, F_R)$, where 
$\pmb{\Sigma} = \pmb{A} \pmb{A}^T$, and $F_R$ is the cumulative distribution function of $R$.
\end{definition}

If $\pmb{\mu} = \pmb{0}$, $r = p$, and $\pmb{A}$ is orthogonal, then $\pmb{X} \sim \mathcal{E}_p(\pmb{0}, \pmb{I}_p, F_R)$ is said to be {\em spherically symmetric}. 
For $\pmb{X} \sim \mathcal{N}(\pmb{0}, \pmb{I}_p)$, the random variable $R$ is chi-distributed with degree of freedom $p$.
Below we list a few useful properties of elliptical distributions, see \cite{Bryc95, CHS81, FKN90} for further development.

\begin{enumerate}
\item
The random vector $\pmb{X}  \sim \mathcal{E}_p(\pmb{\mu}, \pmb{\Sigma}, F_R)$ has finite moments of order $k > 0$ if and only if 
$\mathbb{E}R^k < \infty$.  
If the first two moments exist, then
$\mathbb{E} \pmb{X} = \pmb{\mu}$ and $ \Var \pmb{X} = \frac{\mathbb{E}R^2}{r} \pmb{\Sigma}$.
\item
The distribution of $\pmb{X}  \sim \mathcal{E}_p(\pmb{\mu}, \pmb{\Sigma}, F_R)$ is absolutely continuous with respect to Lebesgue measure on $\mathbb{R}^p$ if and only if $r = p = \rank(\pmb{\Sigma})$, and the distribution of $R$ is absolutely continuous with respect to Lebesgue measure on $\mathbb{R}_{+}$.
\item
The marginal and conditional distributions of $\pmb{X}  \sim \mathcal{E}_p(\pmb{\mu}, \pmb{\Sigma}, F_R)$ are also elliptical. 
For the sake of simplicity, assume that $r = p = \rank(\pmb{\Sigma})$. 
Let $\pmb{X} := (\pmb{X}_1, \pmb{X_2})$, with $\pmb{X}_1 \in \mathbb{R}^{p_1}$ and $\pmb{X}_2 \in \mathbb{R}^{p_2}$ $(p_1 + p_2 = p)$. Let
$\pmb{\mu} := 
\begin{pmatrix}
    \pmb{\mu}_1 \\
   \pmb{\mu}_2
\end{pmatrix}$
and
$\pmb{\Sigma}: = 
\begin{pmatrix}
    \pmb{\Sigma}_{11} & \pmb{\Sigma}_{12} \\
   \pmb{\Sigma}_{21} & \pmb{\Sigma}_{22}
\end{pmatrix}$,
with $\pmb{\mu}_1 \in \mathbb{R}^{p_1}$, $\pmb{\mu}_2 \in \mathbb{R}^{p_2}$, $\pmb{\Sigma}_{11} \in \mathcal{M}_{p_1}(\mathbb{R})$, $\pmb{\Sigma}_{12} =\pmb{\Sigma}_{21}^T  \in \mathcal{M}_{p_1,p_2}(\mathbb{R})$, and $\pmb{\Sigma}_{22} \in \mathcal{M}_{p_2}(\mathbb{R})$.
Then $\pmb{X}_1 \sim \mathcal{E}_{p_1}(\pmb{\mu}_1, \pmb{\Sigma}_{11}, F_R)$, and $(\pmb{X}_1| \pmb{X}_2 = \pmb{x}_2) \sim \mathcal{E}_{p_1}(\pmb{\mu}_{1|2}, \pmb{\Sigma}_{1|2}, F_{R_{1|2}})$ where
\begin{equation*}
\pmb{\mu}_{1|2}: = \pmb{\mu}_1 + \pmb{\Sigma}_{12} \pmb{\Sigma}_{22}^{-1}(\pmb{x}_2 - \pmb{\mu}_2), \qquad 
\pmb{\Sigma}_{1|2}:= \pmb{\Sigma}_{11} - \pmb{\Sigma}_{12} \pmb{\Sigma}_{22}^{-1} \pmb{\Sigma}_{21}, 
\end{equation*}
and 
\begin{equation*}
\label{eq:condF}
F_{R_{1|2}}(r) = \frac{\int_{d_{\pmb{\Sigma}_{22}} (\pmb{x}_2, \pmb{\mu}_2)}^{\sqrt{r^2 +d^2_{\pmb{\Sigma}_{22}}(\pmb{x}_2, \pmb{\mu}_2)}} \left(s^2 -d^2_{\pmb{\Sigma}_{22}} (\pmb{x}_2, \pmb{\mu}_2)\right)^{p_1/2 - 1} s^{-p+2} dF_R(s)}{\int_{d_{\pmb{\Sigma}_{22}} (\pmb{x}_2, \pmb{\mu}_2)}^{\infty} \left(s^2 -d^2_{\pmb{\Sigma}_{22}} (\pmb{x}_2, \pmb{\mu}_2)\right)^{p_1/2 - 1} s^{-p+2} dF_R(s)},
\end{equation*}
with 
$d_{\pmb{\Sigma}_{22}} (\pmb{x}_2, \pmb{\mu}_2): = \sqrt{(\pmb{x}_2-\pmb{\mu}_2)^T \pmb{\Sigma}^{-1}_{22} (\pmb{x}_2-\pmb{\mu}_2)}$ the {\em Mahalanobis distance} between $\pmb{x}_2$ and $\pmb{\mu}_2$ in the metric associated with $\pmb{\Sigma}_{22}$.
\end{enumerate}

\subsection{Main Result}
\label{sc23}
Before stating the main result, we make a few assumptions on the link function, the covariate distribution, and model parameters.
As seen in Section \ref{sc:existence}, the existence of the maximum likelihood estimate of binary response generalized linear models can be translated into data geometry.
Thus, we need the assumptions on the link function in Theorem \ref{thm:LK}.

\begin{assumpt}[link function]
\label{assumpt1}
For $\sigma: \mathbb{R} \rightarrow [0,1]$, both $\sigma(\cdot)$ and $1 - \sigma(\cdot)$ are log-concave.
\end{assumpt}

The phase transition for the existence of the maximum likelihood estimate is expected to occur not only with Gaussian covariates, but with a broad range of covariate distributions.
Here we consider elliptical distributions.
To exclude singularity, we assume that the covariate distribution is absolutely continuous with respect to Lebesgue measure on $\mathbb{R}^p$. 

\begin{assumpt}[covariate distribution]
\label{assumpt2}
The covariates $\pmb{x}_i \sim \mathcal{E}_p(\pmb{\mu}, \pmb{\Sigma}, F_R)$ are of full rank. That is,
$r = p = \rank(\pmb{\Sigma})$ and $F_R$ is absolutely continuous on $\mathbb{R}_{+}$.
\end{assumpt}

To get a meaningful result in diverging dimension, we consider a sequence of problems with the intercept $\beta_0$ fixed, and $\Var(\pmb{x}_i^T \pmb{\beta}) \rightarrow \gamma_0^2$.
Recall that for $\pmb{x}_i \sim \mathcal{E}_p(\pmb{\mu}, \pmb{\Sigma}, F_R)$, we have
$\Var(\pmb{x}_i^T \pmb{\beta}) =  \frac{\mathbb{E}R^2}{p} \pmb{\beta}^T \pmb{\Sigma} \pmb{\beta}$.
This leads to the following assumption on model parameters.
\begin{assumpt}[parameter scaling]
\label{assumpt3}
As $p \rightarrow \infty$,
$\mathbb{E}R^2/p \rightarrow \alpha_0^2$ and  $|\pmb{\Sigma}^{1/2}\pmb{\beta}| \rightarrow \gamma_0 / \alpha_0$.
\end{assumpt}

In the remaining of the paper, we define
\begin{equation}
  (Y^{(p)},X^{(p)}) \sim F_{\alpha_0, \beta_0, \gamma_0} \quad \mbox{if} \quad (Y^{(p)},X^{(p)}) \stackrel{(d)}{=} (V^{(p)}, V^{(p)}U^{(p)}),
  \label{eq:projected_V_U}
\end{equation}
where $U^{(p)}$ is distributed as any component of $\pmb{x}_i \sim \mathcal{E}_p(\pmb{0}, \pmb{I}_p, F_R)$, and $\mathbb{P}(V^{(p)}=1|U^{(p)}) = 1 - \mathbb{P}(V^{(p)} = -1|U^{(p)}) = \sigma(\beta_0 + \gamma_0 U^{(p)}/\alpha_0)$.
The parametric distribution $F_{\alpha_0, \beta_0, \gamma_0}$ is an analog of $F_{\beta_0, \gamma_0}$ introduced in \cite{CS18}.
Here the superscript $(p)$ emphasizes that the distribution of $(Y^{(p)},X^{(p)})$ or $(V^{(p)},U^{(p)})$ may depend on $p$. 
The dependence in the Gaussian case is easily ignored since for $\pmb{x}_i \sim \mathcal{N}(\pmb{0}, \pmb{I}_p)$, $U^{(p)}$ is distributed as standard normal independent of $p$.
For the general elliptical covariates, we make the following technical assumption.
\begin{assumpt}[projection limit]
\label{assumpt4}
The projection $U^{(p)}$ converges in distribution to $U$. That way, 
$(Y^{(p)}, X^{(p)})$ converges in distribution to $(Y,X)$.
\end{assumpt}

Now we give a sufficient condition for Assumption \ref{assumpt4} to hold.
Let $m_{p,k}$ be the $k^{th}$ moment of $U^{(p)}$. 
If for each $k \ge 1$, $m_{p,k} \rightarrow m_k$ as $p \rightarrow \infty$ and 
\begin{equation}
\label{eq:Carl}
    \sum_{k = 1}^{\infty} m_{2k}^{-\frac{1}{2k}} = \infty,
\end{equation}
then $U^{(p)}$ converges in distribution to $U$ whose distribution is entirely characterized by the moments $(m_k; \, k \ge 1)$.
The condition \eqref{eq:Carl} is referred to as the {\em Carleman's condition}. 
See Lin \cite[Theorem 1]{Lin17} for a list of equivalent conditions. 

Define
\begin{equation}
\label{eq:p+-}
p_{+}(x) = \sigma\left(\beta_0 + \frac{\gamma_0}{\alpha_0}x\right) \quad \mbox{and} \quad p_{-}(x): = 1 - p_{+}(x).
\end{equation}
Denote $f_{X}$ as the density of $X$.
Also define
\begin{align}
\label{eq:G+-}
& G_{p,+}(x) = \int_{z \le x}p_{+}(z)f_{X^{(p)}}(z)dz   \quad \mbox{and} \quad G_{p,-}(x) = \int_{z \le x}p_{-}(z)f_{X^{(p)}}(z)dz, \notag \\
& \overline{G}_{p,+}(x) = \int_{z > x}p_{+}(z)f_{X^{(p)}}(z)dz \quad \mbox{and} \quad \overline{G}_{p,-}(x) = \int_{z > x}p_{-}(z)f_{X^{(p)}}(z)dz.
\end{align}
So $G_{p,+}(x) + G_{p,-}(x)$ is the cumulative distribution function of $X^{(p)}$, and $G_{p,\pm}(x) + \overline{G}_{p,\pm}(x) = \mathbb{E}p_{\pm}(X^{(p)})$.
The main result is stated as follows.

\begin{theorem}
\label{thm:main}
Let $(Y^{(p)},X^{(p)}) \sim F_{\alpha_0, \beta_0,\gamma_0}$, and $(Y, X)$ be the limit in distribution under Assumption \ref{assumpt4}.
Let $Z \sim \mathcal{N}(0,1)$ be independent of $(Y,X)$.
Define
\begin{align}
\label{eq:hmle}
h_{\text{MLE}}(\alpha_0, \beta_0,\gamma_0): &= \lim_{p \rightarrow \infty} \min_{\lambda_0, \lambda_1 \in \mathbb{R}} \mathbb{E}(\lambda_0 Y^{(p)} + \lambda_1 X^{(p)} - Z)_{+}^2  \notag \\
& =  \min_{\lambda_0, \lambda_1 \in \mathbb{R}} \mathbb{E}(\lambda_0 Y + \lambda_1 X - Z)_{+}^2,
\end{align}
where $x_{+}: = \max\{x,0\}$. 
If Assumptions \ref{assumpt1}-\ref{assumpt3} are satisfied and $\sup_p \mathbb{E}[(X^{(p)})^8]$ $< \infty$, and 
\begin{equation}
\label{eq:pG}
\mathbb{E}[p_{\pm}(X^{(p)}) (G_{p,\mp}(X^{(p)}) + \overline{G}_{p,\pm}(X^{(p)}))^{n-1}]  = o\left(\frac{1}{n}\right),
\end{equation}
then we have
\begin{align*}
& \kappa > h_{\text{MLE}}(\alpha_0, \beta_0, \gamma_0) \quad \Rightarrow \quad  \lim_{n,p \rightarrow \infty} \mathbb{P}({\mbox{maximum likelihood estimate exists}}) = 0. \\
& \kappa < h_{\text{MLE}}(\alpha_0, \beta_0, \gamma_0) \quad \Rightarrow \quad  \lim_{n,p \rightarrow \infty} \mathbb{P}({\mbox{maximum likelihood estimate exists}}) = 1.
\end{align*}
\end{theorem}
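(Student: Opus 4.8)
The plan is to recast the existence of the maximum likelihood estimate as a statement about separability of the signed data, and then to resolve the resulting phase transition by combining a convex-geometric reduction with the stochastic-approximation machinery of Theorem \ref{thm:2}. By Assumption \ref{assumpt1} and Theorem \ref{thm:LK}, the maximum likelihood estimate exists if and only if the data overlap, i.e. if and only if the augmented signed vectors $\pmb{a}_i := y_i(1,\pmb{x}_i^T)^T \in \mathbb{R}^{p+1}$ cannot be placed in a half-space through the origin. Thus $\mathbb{P}(\text{MLE exists}) = 1 - \mathbb{P}(\exists\, \pmb{b}\neq \pmb{0}:\ \pmb{a}_i^T\pmb{b}\ge 0\ \forall i)$, and the theorem reduces to locating the phase transition of this separability probability as $p/n\to\kappa$. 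Geometrically, separability is the event that the $(p+1)$-dimensional subspace $\sppa(\pmb{A})$, where $\pmb{A}$ has rows $\pmb{a}_i^T$, meets the positive orthant of $\mathbb{R}^n$; by conic duality this is controlled by the statistical dimension of the associated polar cone, so the transition occurs when the normalized statistical dimension crosses the line determined by $\kappa$.

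Next I would exploit the elliptical symmetry (Assumption \ref{assumpt2}) to collapse the $n$-dimensional cone computation onto the two directions that carry information: the intercept and the signal $\pmb{\beta}$. After whitening by $\pmb{\Sigma}^{1/2}$ and rotating so that $\pmb{\beta}$ is a coordinate axis, each covariate splits into its projection $U^{(p)}$ onto the signal, on which the label depends through $\sigma(\beta_0+\gamma_0 U^{(p)}/\alpha_0)$, yielding exactly the pair $(Y^{(p)},X^{(p)})$ of \eqref{eq:projected_V_U}, and a component in the $(p-1)$ orthogonal directions. The orthogonal component is conditionally spherically symmetric and independent of the label, so its inner product against any test direction concentrates, under Assumption \ref{assumpt3} and the bound $\sup_p\mathbb{E}[(X^{(p)})^8]<\infty$, around a Gaussian $Z\sim\mathcal{N}(0,1)$. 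This is the origin of the variable $Z$ and of the two free multipliers $\lambda_0,\lambda_1$ in \eqref{eq:hmle}: projecting the Gaussian vector onto the polar of the separating cone leaves, coordinatewise, the residual $(\lambda_0 Y^{(p)}+\lambda_1 X^{(p)}-Z)_+$, whose optimized mean square is the per-observation statistical dimension.

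I would then invoke Theorem \ref{thm:2} to prove that the random, finite-$p$ value of this convex program concentrates and that its limit is $\min_{\lambda_0,\lambda_1}\mathbb{E}(\lambda_0 Y + \lambda_1 X - Z)_+^2 = h_{\text{MLE}}$. The stochastic-approximation viewpoint treats the joint limit in $(n,p)$ and the optimization simultaneously, replacing the bare-hands Gaussian computation of \cite{CS18}; passing to the limit inside the minimum uses Assumption \ref{assumpt4} for the convergence $(Y^{(p)},X^{(p)})\Rightarrow(Y,X)$ together with the eighth-moment bound for uniform integrability and the coercivity of $(\lambda_0,\lambda_1)\mapsto\mathbb{E}(\lambda_0 Y+\lambda_1 X-Z)_+^2$, which keeps the minimizers bounded and convergent. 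Condition \eqref{eq:pG} enters here to discard the degenerate low-dimensional (threshold-type and quasi-complete) separation configurations whose contribution would otherwise blur the transition, so that the sharp dichotomy is genuinely governed by $h_{\text{MLE}}$. A kinematic escape-through-a-mesh comparison then yields $\mathbb{P}(\text{separable})\to 1$ when $\kappa>h_{\text{MLE}}$ and $\to 0$ when $\kappa<h_{\text{MLE}}$, which is the claim after complementation.

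The hard part will be the second step. Because $\sppa(\pmb{A})$ is not rotationally invariant, carrying the deterministic intercept and signal directions, the standard kinematic formula cannot be applied to it directly. The real work is in rigorously reducing to the rotationally invariant problem in the orthogonal directions, proving that the orthogonal projections behave Gaussianly with the correct variance, and controlling the resulting error terms uniformly over the $(p-1)$ nuisance directions. This is precisely where Assumption \ref{assumpt4} and the moment hypotheses are indispensable, and where Gaussianity in \cite{CS18} trivializes what must here be established by concentration and stochastic approximation.
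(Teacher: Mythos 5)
Your overall architecture matches the paper's: reduce existence of the MLE to data separation via Theorem \ref{thm:LK}, split off the intercept/signal pair $(Y^{(p)},X^{(p)})$ from the $(p-1)$ nuisance directions, apply a kinematic-formula argument to the cone generated by $\mathcal{W}=\sppa(\pmb{Y}^{(p)},\pmb{X}^{(p)})$, identify the statistical dimension with the convex program in \eqref{eq:hmle}, and use Theorem \ref{thm:2} plus condition \eqref{eq:pG} to finish. However, there is a genuine conceptual gap in your second step, and it causes you to misidentify the hard part of the proof. You assert that the Gaussian $Z$ in \eqref{eq:hmle} arises because ``the orthogonal projections behave Gaussianly with the correct variance,'' and you plan to establish this by concentration, ``controlling the resulting error terms uniformly over the $(p-1)$ nuisance directions.'' That is not how the argument works, and the program you describe (a universality/Gaussian-comparison step for the nuisance coordinates) is both unnecessary and much harder than what is actually needed. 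The point of ellipticity is that, \emph{conditionally} on $(Y^{(p)},X^{(p)})$, the vector $(X_2,\ldots,X_p)$ is exactly spherically symmetric (see \eqref{eq:joint}), so the subspace $\mathcal{L}=\sppa(\pmb{X}_2,\ldots,\pmb{X}_p)$ is uniformly distributed on the Grassmannian. The approximate kinematic formula \eqref{eq:geo} then applies directly, with no Gaussian approximation of the covariates at all; the $Z\sim\mathcal{N}(0,1)$ enters only through the definition of the statistical dimension $\delta(\mathcal{C})=n-\mathbb{E}|\pmb{Z}-\Pi_{\mathcal{C}}(\pmb{Z})|^2$ and the identity \eqref{eq:SD}, where $\pmb{Z}$ is the reference Gaussian of the cone, not a limit of the data.

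Because of this misattribution, your proposal leaves the two steps that actually carry the weight of the proof unaddressed. First, the convergence $Q_{p,n}\to h_{\text{MLE}}$ (Theorem \ref{thm:2}) is nontrivial precisely because the summands form a triangular array whose law depends on $p$: one needs epi-convergence of the empirical objectives, strong convexity of $G(\pmb{\lambda})=\mathbb{E}(\lambda_0Y+\lambda_1X-Z)_+^2$, and boundedness in probability of $\argmin\widehat{G}_n$ (the eighth-moment hypothesis feeds the law of large numbers for the triangular array, not a uniform-integrability step inside the minimum as you suggest). Second, you correctly sense that \eqref{eq:pG} kills threshold-type separation, but you give no mechanism; the paper's Proposition \ref{prop2} requires an explicit order-statistics computation expressing $\mathbb{P}(\mbox{No MLE Single})$ in terms of $G_{p,\pm}$ and $\overline{G}_{p,\pm}$, and this bound is needed to make the sandwich in Proposition \ref{prop:41} close. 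As written, your proof would stall at the nuisance-direction step on a claim that is false as stated (no Gaussian limit of the orthogonal components is available or required) while the genuinely delicate analysis remains to be done.
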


The proof is deferred to Section \ref{s4}. The assumption $\sup_p \mathbb{E}[(X^{(p)})^8] < \infty$, which is purely technical, is used to prove the law of large numbers for triangular arrays.
As suggested by simulations in Section \ref{s3}, the phase transition exists even without this moment condition.
The assumption \eqref{eq:pG} is used to prove that the probability the data points can be separated via a univariate model is small. 
A sufficient condition for \eqref{eq:pG} to hold is that $G_{p, \mp}+ \overline{G}_{p, \pm}$ is bounded away from $1$. 
That is, there exists $\epsilon > 0$ independent of $p$ such that
\begin{equation}
\label{eq:pGsuff}
G_{p,\mp}(x) + \overline{G}_{p,\pm}(x) < 1 - \epsilon \quad  \mbox{for all } x.
\end{equation}
So the term $\mathbb{E}[p_{\pm}(X^{(p)}) (G_{\mp}(X^{(p)}) + \overline{G}_{\pm}(X^{(p)}))^{n-1}]$ is exponentially small in $n$.
To illustrate, we check the condition \eqref{eq:pGsuff} for the logistic regression with Gaussian covariates. 
To simplify the discussion, we take $\beta_0 = 0$, and $\alpha_0 = \gamma_0 = 1$.
In this case, $p_{+}(x) = 1 - p_{-}(x) = e^x/(1+e^x)$ and $X^{(p)} = X \sim \mathcal{N}(0,1)$.
Fixing $x \ge 0$, we get
\begin{align*}
G_{p, -}(x) + \overline{G}_{p, +}(x) = \frac{1}{2} + \int_{x}^{\infty} \frac{e^z - 1}{1+e^z} f_X(z) dz < 1.
\end{align*}
Similarly, we can prove that $G_{p,-}(x) + \overline{G}_{p,+}(x) < 1$ for $x< 0$.
It is also easily checked that all examples in Section \ref{s3} except the log-normal distribution satisfy the sufficient conditions \eqref{eq:Carl}-\eqref{eq:pGsuff}.
\section{Empirical Results}
\label{s3}

In this section we perform experiments to verify the phase transition of the maximum likelihood estimate by $(i)$ computing $h_{\text{MLE}}$ defined by  \eqref{eq:hmle}; $(ii)$ checking whether the data is separated by the linear programming as in \cite{CS18}:
\begin{eqnarray}
  \max_{b_0, \pmb{b}} && \sum_{i = 1}^n y_i (b_0 + \pmb{x}_i^T \pmb{b}) \label{eq:separable_lp} \\
  \text{subject to} && y_i (b_0 + \pmb{x}_i^T \pmb{b}) \geq 0, \,  i = 1, \ldots, n\nonumber\\
                            && -1 \leq b_0 \leq 1, \, -\pmb{1} \leq \pmb{b} \leq \pmb{1}; \nonumber
\end{eqnarray}
Note that the maximum likelihood estimate of the binary response generalized linear model exists if the linear programming \eqref{eq:separable_lp} only has the trivial solution.
We compare the theoretical phase transition curve with the empirical observations under several simulation designs described as below.

First, we argue that $\pmb{A} = \pmb{I}_p$ suffices to validate our theory. 
Consider $\tilde{\pmb{X}} = \pmb{A}^{-1}\pmb{X}$ and $\tilde{\pmb{\beta}} = \pmb{A}^{\text{T}} \pmb{\beta}$ so that  $\tilde{\pmb{X}}\tilde{\pmb{\beta}} = \pmb{X}\pmb{\beta}$. 
The conditional distribution of the response $Y$ given $\tilde{\pmb{X}}\tilde{\pmb{\beta}}$ is the same as that given $\pmb{X}\pmb{\beta}$. If $(\tilde{\pmb{X}}_1, Y_1), \ldots, (\tilde{\pmb{X}}_n, Y_n)$ are linearly separated by a hyperplane $\pmb{\beta}^*$, then $(\pmb{X}_1, Y_1), \ldots, (\pmb{X}_n, Y_n)$ are linearly separated by a hyperplane $(\pmb{A}^{\text{T}})^{-1}\pmb{\beta}^*$. The other way around also holds. 
Note that $\tilde{\pmb{X}} \sim \mathcal{E}_p (\pmb{A}^{-1}\pmb{\mu},\pmb{I}_p, F_R)$, so it suffices to consider $\pmb{A} = \pmb{I}_p$.

For the elliptical covariates, we set $\pmb{\mu} = \bold 0$, $\pmb{A} = \pmb{I}_p$ and consider different distributions for the non-negative variable $R$, including the chi distribution with degree of freedom $p$, the Gamma distributions, the Pareto distributions, the half-normal distribution and the log-normal distribution. When generating the binary response by \eqref{eq:GLM}, we choose the logit function, the cloglog function and the probit function for the link functino. We simply take $\beta_0 = 0$. See \cite{CS18} for results with $\beta_0 \neq 0$.
To ensure Assumption \ref{assumpt3}, let
\begin{equation}
    \mathbb{E} R^2 = p \alpha_0^2 + 1 \quad \mbox{and} \quad \pmb{\beta} = (\widetilde{\bf W}/||\widetilde{\bf W}||_2 + 1/p) \cdot \gamma_0/\alpha_0,
    \label{eq:R2_beta}
\end{equation}
where $\widetilde{\bf{W}} \sim \mathcal{N}(\pmb{0}, \pmb{I}_p)$.

We fix $n = 1000$, $\alpha_0 = 1$, and vary $\gamma_0 \in \{0.01, 0.02, \ldots, 10.00\}$ and $\kappa = p/n \in \{0.005, 0.01, \ldots, 0.6\}$. The parameter $\alpha_0$ is simply set as 1 since we observed that it does not affect the phase transition much. A large $\alpha_0$ might slightly shift the phase transition curve (the reds curve in Figure \ref{fig:pt_chip_link}) to the right, and enlarge the uncertain band (the green bands). Once the data is generated, we solve the problem \eqref{eq:separable_lp} by checking whether a non-trivial solution exists.
We repeat the procedure for $100$ times, and get a heat map which indicates the proportion of times that the maximum likelihood estimate exists for each pair $(\gamma_0, \kappa)$.
See Figure \ref{fig:pt_chip_link} for the chi case with degree of freedom $p$. 
Results for other designs can be found in Appendix \ref{appendix:results}.

\subsection{Multivariate Gaussian covariates with different link functions}\label{sec:sim_multigaussian}
We consider the multivariate Gaussian covariates, which have been studied for the logit link in \cite{CS18}.
In our setup, $R$ is sampled from a chi distribution with degree of freedom $p$ and the link function is one of \{\textit{logit}, \textit{cloglog}, \textit{probit}\}.
Figure \ref{fig:pt_chip_link} displays the phase transition for the existence of the maximum likelihood estimate for different link functions.
There are green bands in these figures, which indicates that the maximum likelihood estimate exists indefinitely when $(\gamma_0, \kappa)$ falls in this band with the given sample size.
This region is referred to as the {\em uncertainty band}.
Observe that for the multivariate Gaussian covariates, as expected, the $h_{\text{MLE}}$ curves lie in the uncertainty bands for different link functions. 
\begin{figure}[ht]
  \centering
  \includegraphics[width=1 \textwidth]{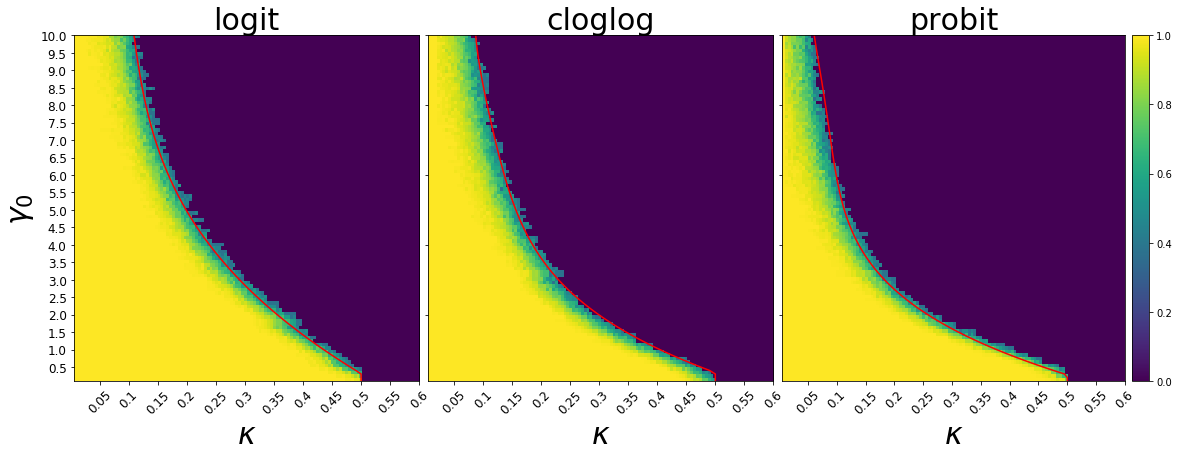}
  \caption{Phase transition for the existence of the maximum likelihood estimate for $R \sim$ chi distribution with degree of freedom $p$. 
  The red curve is the theoretical $h_{\text{MLE}}$ boundary given by \eqref{eq:hmle}.}
  \label{fig:pt_chip_link}  
\end{figure}

We also use the same setup to investigate the situation when $\pmb{A} \neq \pmb{I}_p$. In this case, we generate $\pmb{A}$ such that each entry $A_{ij}$ is i.i.d sampled from the standard Gaussian distribution. To make sure it is full rank, we let $\pmb{A} \leftarrow \frac{1}{10000}\Vert \pmb{A} \Vert_{2} \cdot \pmb{I}_p + \pmb{A}$. The result is deferred to Figure \ref{fig:pt_chip_link_full} (Middle). We find that the result is very similar to that in Figure \ref{fig:pt_chip_link}, which corroborates our argument that it suffices to use $\pmb{A} = \pmb{I}_p$ to validate our theory.

\subsection{Gamma-distributed $R$}\label{sec:sim_gamma}
In \cite{CS18}, $U^{(p)}$ defined by \eqref{eq:projected_V_U} simplifies to the Gaussian distribution, which does not depend on $p$. 
This is key to their proof of the phase transition for the existence of the maximum likelihood estimate. 
However, when we go beyond the chi distribution for $R$, $U^{(p)}$ depends on $p$.
We observe that Assumption \ref{assumpt4} is satisfied for Gamma distributions, and the resulting theoretical phase transition curves agree with the simulations.
More precisely, assume $R \sim \text{Gamma}(k, \theta)$ where $k$ is the shape parameter and $\theta$ is the scale parameter. 
The second moment condition \eqref{eq:R2_beta} gives $\theta = \sqrt{\mathbb{E}R^2/(k^2 + k)}$.
When $k = 0.5$, we get $\theta_{0.5} = \sqrt{4(p+1)/3}$ which corresponds to $\chi$ distribution with degree of freedom $2$ if $\theta_{0.5}$ is an integer; when $k = 1$, it is the Exponential distribution with $\theta_1 = \sqrt{(p+1)/2}$; when $k = 2$, it is a Gamma distribution with $\theta_2 = \sqrt{(p+1)/6}$. 
Figure \ref{fig:pt_gamma_hmle_conv} implies that $h_{\text{MLE}}$ defined by \eqref{eq:hmle} converges quickly as $p$ increases. 
Table \ref{tbl:hmle_bw_tv} indicates that all the theoretical phase transition curves align with the corresponding middle curves of the uncertainty bands.

\begin{figure}[ht]
  \centering
  \includegraphics[width=  \textwidth]{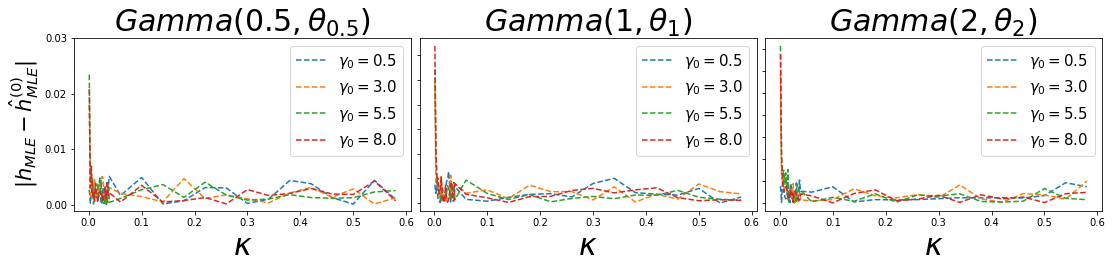}    
  \caption{Convergence of $h_{\text{MLE}}$ by \eqref{eq:hmle} for Gamma distributions. $\hat{h}_{MLE}^{(0)}$ is computed by taking the average of $h_{\text{MLE}}$ for $\kappa \geq 0.3$.}
  \label{fig:pt_gamma_hmle_conv}  
\end{figure}

\begin{table}[]
  \centering
  \caption{Summary of the theoretical $h_{\text{MLE}}$ and the simulations of the phase transition for the maximum likelihood estimate with the logit link function. $h_{0.5}$ is the $\kappa$ such that the proportion of times that the maximum likelihood estimate exists is $0.5$, and the number inside the bracket is the width of the uncertainty interval (a slice of the \textit{uncertainty band})  for a given $\gamma_0$. MIW is the mean width of the uncertainty intervals across $\gamma_0 \in (0, 10]$; MD is the mean difference between $h_{\text{MLE}}$ and $h_{0.5}$.}
  \label{tbl:hmle_bw_tv}
\begin{tabular}{c|cc|cc|cc}
\hline
                           & \multicolumn{2}{c|}{$\gamma_0 = 1$} & \multicolumn{2}{c|}{$\gamma_0 = 9$} & \multicolumn{2}{c}{Overall} \\ \hline
Distribution               & $h_{0.5}$       & $h_{\text{MLE}}$      & $h_{0.5}$       & $h_{\text{MLE}}$      & MIW           & MD            \\ \hline
$Gamma(0.5, \theta_{0.5})$ & 0.435 (0.075)      & 0.4238         & 0.310 (0.095)      & 0.310          & 0.101        & 0.0077        \\ 
$Gamma(1, \theta_{1})$     & 0.450 (0.050)      & 0.458          & 0.260 (0.080)      & 0.246          & 0.071        & 0.0186        \\ 
$Gamma(2, \theta_{2})$     & 0.450 (0.050)      & 0.447          & 0.205 (0.070)      & 0.191          & 0.057        & 0.0160        \\ \hline
$Pareto(2.5, x_m(2.5))$    & 0.455 (0.045)      & 0.458          & 0.165 (0.065)      & 0.172          & 0.055        & 0.0045        \\ 
$Pareto(3.5, x_m(3.5))$    & 0.440 (0.045)      & 0.439          & 0.120 (0.060)      & 0.137          & 0.057        & 0.0095        \\ 
$Pareto(4.5, x_m(4.5))$    & 0.435 (0.055)      & 0.430          & 0.110 (0.060)      & 0.128          & 0.055        & 0.0095        \\ \hline
half-normal                & 0.450 (0.050)      & 0.448          & 0.240 (0.075)      & 0.211          & 0.064        & 0.0215        \\ \hline
log-normal                 & 0.380 (0.135)      & 0.497          & 0.355 (0.150)      & 0.450          & 0.152        & 0.1199        \\ \hline
\end{tabular}
\end{table}
  



\subsection{The moment condition and the tail behavior of $R$}\label{sec:sim_moment_tail}
First we explore a case where the eighth moment of $X^{(p)}$ does not exist as required by Theorem \ref{thm:main}. 
To this end, we sample $R$ from the Pareto distribution of type I. 
We specify the shape parameter $\alpha$, and set the scale parameter $x_m = \sqrt{(\alpha - 2)/\alpha \cdot \mathbb{E} R^2}$. 
Recall that for the Pareto distribution of type I, the fourth moment exists when $\alpha > 4$, and the third moment exists when $\alpha > 3$. 
From Table \ref{tbl:hmle_bw_tv}, we see that the simulation results match the theoretical $h_{\text{MLE}}$ well. 
This suggests that the moment condition in Theorem \ref{thm:main} may be further relaxed.

Subsequently, we study how the tail behavior of the $R$ distribution influences the phase transition curve $h_{\text{MLE}}$. 
In the previous empirical studies, we consider the chi distributions with degree of freedom $p$ and Gamma distributions which have sub-exponential tails; the Pareto distributions have polynomial tails. 
We also investigate the half-normal distribution with a sub-Gaussian tail, and the log-normal distribution with another heavy tail. 
To ensure \eqref{eq:R2_beta}, we set the scale parameter $\sigma^2 = \mathbb{E} R^2 = p + 1$ for the half-normal distribution, and $\sigma^2 = 0.5 \cdot \log \mathbb{E} R^2 = \log \sqrt{p+1}$ for the log-normal distribution. 
From Table \ref{tbl:hmle_bw_tv}, we observe that the theoretical $h_{\text{MLE}}$ successfully predicts the phase transition in the simulations of the half-normal distribution, but fails for the log-normal distribution. 
This is due to the fact that the log-normal distribution is not uniquely characterized by its moments, and the sufficient condition \eqref{eq:Carl} does not hold. 
See Appendix \ref{appendix:results} for more detailed results for the exploration.


\section{Proof of Theorem \ref{thm:main}}
\label{s4}

\subsection{Roadmap to the proof of Theorem \ref{thm:main}}

\fbox{{\em Elliptical covariates}}
Assume that the covariates $\pmb{x}_i \sim \mathcal{E}_p(\pmb{\mu}, \pmb{\Sigma}, F_R)$ with $r = p = \rank(\pmb{\Sigma})$.
It is easily seen that $\pmb{x}_i = \pmb{\mu} + \pmb{\Sigma}^{1/2}\pmb{z}_i$ with $\pmb{z}_i \sim \mathcal{E}_p(\pmb{0}, \pmb{I}_p, F_R)$. 
Recall from Section \ref{sc:existence} that for GLMs satisfying Assumption \ref{assumpt1}, 
the MLE does not exist if and only if there is $(b_0, \pmb{b}) \ne \pmb{0}$ such that 
$y_i(b_0 + \pmb{x}_i^T \pmb{b}) \ge 0$ for all $i$. 
This is equivalent to the existence of $(\widetilde{b}_0, \widetilde{\pmb{b}}) \ne \pmb{0}$ such that $y_i(\widetilde{b}_0 + \pmb{z}_i^T \widetilde{\pmb{b}}) \ge 0$ for all $i$.
Without loss of generality, we assume $\pmb{x}_i \sim \mathcal{E}_p(\pmb{0}, \pmb{I}_p, F_R)$ in the sequel.

We are in a situation where the covariates $\pmb{x}_i: = (x_{i1}, \ldots, x_{ip})$ is spherically symmetric and $\Var(\pmb{x}_i^T \pmb{\beta}) = |\pmb{\beta}|^2 \mathbb{E}R^2 / p \rightarrow \gamma_0^2$.
By rotational invariance, we assume that all the signal is in the first coordinate.
That is,
$\mathbb{P}(y_i = 1| \pmb{x}_i) = \sigma \left(\beta_0 + \frac{\gamma_0}{\alpha_0} x_{i1}\right)$.
The results in Section \ref{sc:ED} show that
\begin{equation}
\label{eq:joint}
(y_i, y_i\pmb{x}_i) \stackrel{(d)}{=} (Y^{(p)}, X^{(p)}, X_2, \ldots, X_p),
\end{equation} 
where $(Y^{(p)},X^{(p)}) \sim F_{\alpha_0, \beta_0, \gamma_0}$, and 
$$(X_2, \ldots, X_p|X^{(p)} = x) \sim \mathcal{E}_{p-1}(\pmb{0}, \pmb{I}_{p-1}, F_{R_{-1}}),$$ 
with
$F_{R_{-1}}(r) = \frac{\int_{|x|}^{\sqrt{r^2 + x^2}}(s^2 - x^2)^{(p-3)/2} s^{-p+2} dF_R(s)}{\int_{|x|}^{\infty}(s^2 - x^2)^{(p-3)/2} s^{-p+2} dF_R(s)}$.

Now we want to express $\mathbb{P}(\mbox{no MLE})$ via conic geometry. 
For a fixed space $\mathcal{W} \in \mathbb{R}^n$, let 
\begin{equation*}
\mathcal{C}(\mathcal{W}) = \{\pmb{w} + \pmb{u}: \pmb{w} \in \mathcal{W}, \, \pmb{u} \ge 0\},
\end{equation*}
be the {\em convex cone} generated by $\mathcal{W}$.
The following proposition is read from \cite[Propositions 1 $\&$ 2]{CS18}, 
and we include the proof in Section \ref{B1} for completeness.

\begin{proposition}
\label{prop:41}
Let the $n$-dimensional vectors $(\pmb{Y}^{(p)}, \pmb{X}^{(p)}, \pmb{X}_2, \ldots , \pmb{X}_p)$ be $n$ i.i.d. copies of $(Y^{(p)},X^{(p)},X_2, \ldots, X_p)$ distributed as in \eqref{eq:joint}.
Let 

$$\mathcal{L}: = \sppa(\pmb{X}_2, \ldots, \pmb{X}_p) \quad \mbox{and} \quad \mathcal{W}: = \sppa(\pmb{Y}^{(p)}, \pmb{X}^{(p)}).$$
Let $\{\mbox{No MLE Single}\}$ be the event that the data points can be completely or quasi-completely separated by 
the intercept and the first coordinate only, i.e. $\mathcal{W} \cap \mathbb{R}_{+}^n \ne \{\pmb{0}\}$.
Then 
\begin{equation}
0 \leq \mathbb{P}(\mbox{no MLE}) - \mathbb{P}(\mathcal{L} \cap \mathcal{C}(\mathcal{W}) \ne \{ \pmb{0} \}) \le \mathbb{P}(\mbox{No MLE Single}).
\end{equation}
\end{proposition}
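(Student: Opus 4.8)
The plan is to reformulate the non-existence of the MLE as a purely geometric statement about how the span $\mathcal{V} := \mathcal{W} + \mathcal{L} = \sppa(\pmb{Y}^{(p)}, \pmb{X}^{(p)}, \pmb{X}_2, \ldots, \pmb{X}_p)$ meets the nonnegative orthant $\mathbb{R}_{+}^n$, and then to compare the cones $\mathbb{R}_{+}^n$ and $\mathcal{C}(\mathcal{W})$. First I would record that, by the reduction at the start of this section together with Theorem \ref{thm:LK} (applicable since $\sigma$ obeys Assumption \ref{assumpt1}), the MLE fails to exist exactly when some nonzero $(b_0, \pmb{b})$ satisfies $y_i(b_0 + \pmb{x}_i^T \pmb{b}) \ge 0$ for all $i$. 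Encoding this through the vector $b_0 \pmb{Y}^{(p)} + b_1 \pmb{X}^{(p)} + \sum_{j \ge 2} b_j \pmb{X}_j \in \mathcal{V}$ shows that, up to a null event on which the $p+1$ generating vectors are linearly dependent, $\{\mbox{no MLE}\} = \{\mathcal{V} \cap \mathbb{R}_{+}^n \ne \{\pmb{0}\}\}$.

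For the upper bound I would argue by a deterministic set inclusion. Suppose $\mathcal{V} \cap \mathbb{R}_{+}^n$ contains some $\pmb{v} \ne \pmb{0}$ and decompose $\pmb{v} = \pmb{\ell} + \pmb{w}$ with $\pmb{\ell} \in \mathcal{L}$ and $\pmb{w} \in \mathcal{W}$. Since $\pmb{\ell} = \pmb{v} + (-\pmb{w})$ with $\pmb{v} \ge \pmb{0}$ and $-\pmb{w} \in \mathcal{W}$, we get $\pmb{\ell} \in \mathcal{L} \cap \mathcal{C}(\mathcal{W})$. If $\pmb{\ell} \ne \pmb{0}$ this already gives $\mathcal{L} \cap \mathcal{C}(\mathcal{W}) \ne \{\pmb{0}\}$; if $\pmb{\ell} = \pmb{0}$ then $\pmb{v} = \pmb{w} \in \mathcal{W} \cap \mathbb{R}_{+}^n \setminus \{\pmb{0}\}$, which is precisely the event $\{\mbox{No MLE Single}\}$. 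Hence $\{\mbox{no MLE}\} \subseteq \{\mathcal{L} \cap \mathcal{C}(\mathcal{W}) \ne \{\pmb{0}\}\} \cup \{\mbox{No MLE Single}\}$, and taking probabilities yields the right-hand inequality.

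For the lower bound I would run the reverse decomposition. If $\pmb{\ell} \in \mathcal{L} \cap \mathcal{C}(\mathcal{W})$ is nonzero, write $\pmb{\ell} = \pmb{w} + \pmb{u}$ with $\pmb{w} \in \mathcal{W}$ and $\pmb{u} \ge \pmb{0}$; then $\pmb{u} = \pmb{\ell} - \pmb{w} \in \mathcal{V}$ is nonnegative, so whenever $\pmb{u} \ne \pmb{0}$ we land in $\{\mathcal{V} \cap \mathbb{R}_{+}^n \ne \{\pmb{0}\}\} = \{\mbox{no MLE}\}$. The only obstruction is the case $\pmb{u} = \pmb{0}$, which forces $\pmb{0} \ne \pmb{\ell} = \pmb{w} \in \mathcal{L} \cap \mathcal{W}$. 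Thus it suffices to show $\mathcal{L} \cap \mathcal{W} = \{\pmb{0}\}$ almost surely, for then $\{\mathcal{L} \cap \mathcal{C}(\mathcal{W}) \ne \{\pmb{0}\}\} \subseteq \{\mbox{no MLE}\}$ up to a null set, which gives the left-hand inequality.

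The step I expect to be the crux is this almost-sure transversality $\mathcal{L} \cap \mathcal{W} = \{\pmb{0}\}$, equivalently the linear independence of $\pmb{Y}^{(p)}, \pmb{X}^{(p)}, \pmb{X}_2, \ldots, \pmb{X}_p$ so that $\mathcal{V} = \mathcal{W} \oplus \mathcal{L}$. I would condition on the sign vector $\pmb{Y}^{(p)}$; given $\pmb{Y}^{(p)}$, Assumption \ref{assumpt2} (full rank and absolute continuity of the covariate law) makes $(\pmb{X}^{(p)}, \pmb{X}_2, \ldots, \pmb{X}_p)$ absolutely continuous on $\mathbb{R}^{n \times p}$, so any nontrivial linear dependence among these vectors together with the fixed nonzero vector $\pmb{Y}^{(p)}$ is confined to a proper algebraic subvariety and hence has Lebesgue measure zero. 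This is exactly where the continuity hypothesis enters, and where the argument would genuinely break for degenerate covariate laws. In the relevant regime $p+1 \le n$ this gives $\mathbb{P}(\mathcal{L} \cap \mathcal{W} \ne \{\pmb{0}\}) = 0$; when $p+1 > n$ both cones fill $\mathbb{R}^n$ almost surely and all three probabilities equal one, so the inequalities hold trivially. Combining the two inclusions then produces the stated sandwich.
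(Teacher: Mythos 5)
Your proposal is correct and follows essentially the same route as the paper: both reduce non-existence of the MLE to the event $\sppa(\pmb{Y}^{(p)},\pmb{X}^{(p)},\pmb{X}_2,\ldots,\pmb{X}_p)\cap\mathbb{R}_+^n\ne\{\pmb{0}\}$ and then compare this with $\mathcal{L}\cap\mathcal{C}(\mathcal{W})$ by splitting a separating vector into its $\mathcal{W}$- and $\mathcal{L}$-components, with absolute continuity of the covariate law disposing of the degenerate cases. If anything you are more explicit than the paper, which packages the two inclusions into a single identity and leaves the reverse inclusion---your transversality step $\mathcal{L}\cap\mathcal{W}=\{\pmb{0}\}$ almost surely---implicit.
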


By Proposition \ref{prop:41}, the existence of the MLE boils down to whether $\mathcal{L}$ intersects $\mathcal{C}(\mathcal{W})$ in a non-trivial way. 
It remains to prove the following: 
$(1)$ The probability $\mathbb{P}(\mbox{No MLE Single})$ is relatively small.
$(2)$ The probability $\mathbb{P}(\mathcal{L} \cap \mathcal{C}(\mathcal{W}) \ne \{ \pmb{0} \})$ exhibits a phase transition through the ratio $\kappa: = p/n$, and $h_{\text{MLE}}(\alpha_0,\beta_0,\gamma_0)$ defined by \eqref{eq:hmle}.

\smallskip
\noindent
\fbox{{\em Separation of data in a univariate model}}
We aim to prove that the probability $\mathbb{P}(\mbox{No MLE Single})$ is small. 
In \cite{CS18}, a sketch of proof is given for the logistic regression with Gaussian covariates. 
In Section \ref{B2}, we give a rigorous proof of this result in the setting of Theorem \ref{thm:main}.
The main difficulty comes from the fact that though the probability the data can be separated via any fixed $t_0 \in \mathbb{R}$ is exponentially small, there are uncountably many such $t_0$ and the union bound does not give a good estimate.

\begin{proposition}
  \label{prop2}
Under the assumptions in Theorem \ref{thm:main}, the event $\{\mbox{No MLE}$ $\mbox{Single}\}$ occurs with small probability.
That is,
$\mathbb{P}(\mbox{No MLE Single}) = o(1)$.
\end{proposition}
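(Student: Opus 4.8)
\emph{Proof proposal.} The plan is to first translate the event $\{\mbox{No MLE Single}\} = \{\mathcal{W}\cap\mathbb{R}_+^n \neq \{\pmb 0\}\}$ into a statement about one-dimensional threshold separation, and then to control the resulting probability by the moment-type bound \eqref{eq:pG}. Writing $Y_i^{(p)} = V_i \in \{-1,1\}$ and $X_i^{(p)} = V_i U_i$ with $U_i$ the single informative coordinate, membership of a nonzero vector $\lambda_0 \pmb Y^{(p)} + \lambda_1 \pmb X^{(p)}$ in the nonnegative orthant is equivalent to the existence of $(\lambda_0,\lambda_1)\neq(0,0)$ with $V_i(\lambda_0 + \lambda_1 U_i)\ge 0$ for every $i$. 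This is exactly the statement that the labels $V_i$ are monotone along the order induced by the covariates $U_i$: either all negatives precede all positives (``positives on the right'') or vice versa, together with the two degenerate single-class configurations. I would therefore bound $\mathbb{P}(\mbox{No MLE Single})$ by the sum of the probabilities of these two orientations.

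First I would treat, say, the ``positives on the right'' orientation. The difficulty flagged in the roadmap is that the separating threshold ranges over a continuum, so a direct union bound over thresholds is hopeless. The resolution I would use is that any such separation is pinned down by a single distinguished data point --- the positive observation with the smallest covariate value (the leftmost positive). Summing over which of the $n$ observations plays this role turns the continuum into a union bound of size $n$. Conditioning on the covariate value $u$ of the pivot and on its label being $+1$, the remaining $n-1$ observations must each land on the ``correct'' side: those with covariate below $u$ must be negative and those above must be positive. By the i.i.d. structure these $n-1$ constraints are conditionally independent, so their joint probability factorizes into the $(n-1)$-st power of a single integral.

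That single integral is exactly the mixed tail $G_{p,-}(u) + \overline{G}_{p,+}(u)$ (with the symmetric roles $G_{p,+}+\overline{G}_{p,-}$ for the opposite orientation), while the pivot contributes the prefactor $p_{+}(u)$. Taking expectations, the probability of the orientation becomes $n\,\mathbb{E}[p_{+}(X^{(p)})(G_{p,-}(X^{(p)}) + \overline{G}_{p,+}(X^{(p)}))^{n-1}]$, and likewise $n\,\mathbb{E}[p_{-}(X^{(p)})(G_{p,+}(X^{(p)}) + \overline{G}_{p,-}(X^{(p)}))^{n-1}]$ for the other orientation. Assumption \eqref{eq:pG} states exactly that each of these expectations is $o(1/n)$, so the factor $n$ from the union bound is absorbed and each orientation has probability $o(1)$. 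The two degenerate single-class events arise as the endpoint terms of this construction (no negatives, or no positives) and are exponentially small, hence negligible. Summing the contributions yields $\mathbb{P}(\mbox{No MLE Single}) = o(1)$.

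The main obstacle is precisely the continuum-of-thresholds issue: the crux of the argument is the combinatorial observation that separability is certified by a single pivot observation, which is what converts the intractable continuous union bound into the finite sum of size $n$ that \eqref{eq:pG} is tailored to kill. Verifying that the conditional factorization indeed produces the quantity $G_{p,\mp} + \overline{G}_{p,\pm}$ appearing in \eqref{eq:pG}, and checking the measure-zero ties together with the endpoint single-class bookkeeping, are the routine but necessary remaining steps.
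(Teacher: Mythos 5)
Your proposal is correct and is essentially the paper's own argument: both reduce the continuum of thresholds to a discrete decomposition indexed by the boundary observation, factorize by conditional independence to obtain exactly $n\,\mathbb{E}\bigl[p_{\pm}(X^{(p)})\bigl(G_{p,\mp}(X^{(p)})+\overline{G}_{p,\pm}(X^{(p)})\bigr)^{n-1}\bigr]$ plus the two single-class terms $(\mathbb{E}p_{\pm}(X^{(p)}))^{n}$, and then invoke \eqref{eq:pG}. The only cosmetic difference is that the paper sums over the split point $k$ of the order statistics and collapses the sum by the binomial theorem, whereas you condition directly on which observation is the leftmost positive; the resulting identity is the same.
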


\noindent
\fbox{{\em Convex geometry and phase transition}} 
We want to prove the phase transition of $\mathbb{P}(\mathcal{L} \cap \mathcal{C}(\mathcal{W}))$ through the interplay between $\kappa$ and $h_{\text{MLE}}(\alpha_0, \beta_0,\gamma_0)$.
The key is to understand when a random subspace $\mathcal{L}$ with uniform orientation intersects $\mathcal{C}(\mathcal{W})$ in a non-trivial way.

For any fixed subspace $\mathcal{W} \in \mathbb{R}^n$, the {\em approximate kinematic formula} \cite[Theorem I]{ALMT}
shows that for any $\varepsilon \in (0,1)$, there exists $a_{\varepsilon} > 0$ such that
\begin{align}
\label{eq:geo}
& p - 1 + \delta(\mathcal{C}(\mathcal{W})) > n + a_{\varepsilon} \sqrt{n} \Longrightarrow \mathbb{P}(\mathcal{L} \cap \mathcal{C}(\mathcal{W})) \ge 1 - \varepsilon, \notag\\
& p - 1 + \delta(\mathcal{C}(\mathcal{W})) < n - a_{\varepsilon} \sqrt{n} \Longrightarrow \mathbb{P}(\mathcal{L} \cap \mathcal{C}(\mathcal{W})) \le \varepsilon,
\end{align}
Here $\delta(\mathcal{C})$ is the {\em statistical dimension} of the convex cone $\mathcal{C}$ defined by
$\delta(\mathcal{C}) := n - \mathbb{E}|\pmb{Z} - \Pi_{\mathcal{C}} (\pmb{Z})|^2$, where $\pmb{Z} \sim \mathcal{N}(\pmb{0}, \pmb{I}_n)$ and $\Pi_{\mathcal{C}}$ is the projection onto $\mathcal{C}$.
The following identity is given in \cite[Lemma 3]{CS18}:
\begin{equation}
\label{eq:SD}
\delta(\mathcal{C}(\mathcal{W})) = n - \mathbb{E}\left( \min_{\pmb{w} \in \mathcal{W}} |(\pmb{w} - \pmb{Z})_{+}|^2 \right).
\end{equation}

Theorem \ref{thm:main} can be derived from the formulas \eqref{eq:geo}-\eqref{eq:SD} and the following theorem.
 \begin{theorem}
\label{thm:2}
Let $(\pmb{Y}^{(p)}, \pmb{X}^{(p)})$ be $n$ i.i.d. samples from $F_{\alpha_0, \beta_0, \gamma_0}$ satisfying Assumption \ref{assumpt4},
and $\sup_p \mathbb{E}[(X^{(p)})^8] < \infty$. 
Let
$$Q_{p,n}: = \min_{\lambda_0, \lambda_1 \in \mathbb{R}} \frac{1}{n} |(\lambda_0 \pmb{Y}^{(p)} + \lambda_1 \pmb{X}^{(p)} - \pmb{Z})_{+}|^2.$$
Then $Q_{p,n}$ converges in probability to $\min_{\lambda_0, \lambda_1 \in \mathbb{R}} \mathbb{E}(\lambda_0 Y + \lambda_1 X - Z)_{+}^2$ as $n,p \rightarrow \infty$.
\end{theorem}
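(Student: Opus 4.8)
The plan is to regard $Q_{p,n}$ as the minimum of a random \emph{convex} objective and to show that this objective converges, uniformly on compact sets, to a deterministic convex limit whose minimizer stays bounded. Write $\lambda = (\lambda_0,\lambda_1)$ and set
\[
q_n(\lambda) = \frac1n\sum_{i=1}^n \bigl(\lambda_0 Y_i^{(p)} + \lambda_1 X_i^{(p)} - Z_i\bigr)_+^2, \quad \bar q_p(\lambda) = \mathbb{E}\bigl(\lambda_0 Y^{(p)} + \lambda_1 X^{(p)} - Z\bigr)_+^2,
\]
and $\bar q(\lambda) = \mathbb{E}(\lambda_0 Y + \lambda_1 X - Z)_+^2$, so that $Q_{p,n} = \min_\lambda q_n(\lambda)$ and the claimed limit is $\min_\lambda \bar q(\lambda)$. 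The structural observation is that $t\mapsto (t)_+^2$ is convex and nondecreasing, hence $\lambda\mapsto(\lambda_0 y+\lambda_1 x-z)_+^2$ is convex for every fixed $(y,x,z)$; consequently $q_n$, $\bar q_p$ and $\bar q$ are convex in $\lambda$, and all are finite since $Y^{(p)}\in\{-1,1\}$, $Z$ is Gaussian, and $X^{(p)}$ has finite second moment. Convexity is what makes the argument go through, since it upgrades pointwise convergence to local uniform convergence essentially for free.

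First I would establish the pointwise convergence $q_n(\lambda)\to_{\mathbb{P}}\bar q(\lambda)$ for each fixed $\lambda$, decomposing it as $q_n(\lambda)-\bar q_p(\lambda)$ plus $\bar q_p(\lambda)-\bar q(\lambda)$. For the first piece, for fixed $n$ (hence fixed $p$) the summands are i.i.d., so Chebyshev's inequality gives $\mathbb{P}(|q_n(\lambda)-\bar q_p(\lambda)|>\varepsilon)\le \Var\bigl((\lambda_0 Y^{(p)}+\lambda_1 X^{(p)}-Z)_+^2\bigr)/(n\varepsilon^2)$, and this variance is bounded uniformly in $p$ because the eighth-moment hypothesis $\sup_p\mathbb{E}[(X^{(p)})^8]<\infty$ furnishes the uniform control of the relevant moments of the squared hinge summand; this is exactly the triangular-array law of large numbers in which the moment condition is used. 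For the second piece, Assumption \ref{assumpt4} gives $(Y^{(p)},X^{(p)})\to(Y,X)$ in distribution, and since $Z$ is independent the triple converges jointly; the functional $(y,x,z)\mapsto(\lambda_0 y+\lambda_1 x-z)_+^2$ is continuous, while the same eighth-moment bound secures its uniform integrability, so $\bar q_p(\lambda)\to\bar q(\lambda)$. Combining the two pieces yields $q_n(\lambda)\to_{\mathbb{P}}\bar q(\lambda)$ in the joint limit $n,p\to\infty$.

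Next I would promote this to uniform convergence on compacta: since $q_n$ and $\bar q$ are finite convex functions on $\mathbb{R}^2$ and $q_n\to_{\mathbb{P}}\bar q$ pointwise on the (countable) rationals, the standard convexity lemma for pointwise-convergent sequences of convex functions gives $\sup_{\lambda\in K}|q_n(\lambda)-\bar q(\lambda)|\to_{\mathbb{P}}0$ for every compact $K\subset\mathbb{R}^2$. It then remains to keep the minimizers from escaping to infinity, for which I would verify that $\bar q$ is coercive. Along a ray $\lambda=t(a,b)$ with $t\to\infty$, writing $W=aY+bX$ and using $Z\perp(Y,X)$ one gets $\bar q(ta,tb)\ge \mathbb{P}(W>\delta)\,\mathbb{E}(t\delta-Z)_+^2$, which grows quadratically in $t$ as soon as $\mathbb{P}(W>0)>0$. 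Now under $F_{\alpha_0,\beta_0,\gamma_0}$ we have $X=YU$ with $Y\in\{-1,1\}$ and $aY+bX=Y(a+bU)$, where $U$ is absolutely continuous with full support and $\mathbb{P}(Y=1\mid U)=\sigma(\beta_0+\gamma_0 U/\alpha_0)\in(0,1)$, so $\mathbb{P}(aY+bX>0)>0$ for every $(a,b)\neq 0$; hence $\bar q$ has quadratic growth along every ray and is coercive, attaining its minimum on a closed ball $B_M$. Choosing $M$ so that $\bar q$ exceeds $\min\bar q$ by a fixed margin on the sphere $|\lambda|=M$, the uniform convergence on $B_M$ forces the minimizer of the convex function $q_n$ into the interior of $B_M$ with probability tending to one. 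The upper bound $Q_{p,n}\le q_n(\lambda^\ast)\to_{\mathbb{P}}\min\bar q$ (with $\lambda^\ast\in\arg\min\bar q$, by pointwise convergence) together with the matching lower bound $Q_{p,n}=\min_{B_M}q_n\ge\min_{B_M}\bar q-\varepsilon=\min\bar q-\varepsilon$ on this event delivers $Q_{p,n}\to_{\mathbb{P}}\min_\lambda\bar q(\lambda)$.

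The main obstacle I anticipate is the triangular-array step: because the law of the summands varies with $n$ through $p=p(n)$, no fixed i.i.d. law of large numbers applies, and the two reductions — the empirical fluctuation $q_n-\bar q_p\to 0$ and the population drift $\bar q_p\to\bar q$ — must be controlled simultaneously in the joint limit. The hypothesis $\sup_p\mathbb{E}[(X^{(p)})^8]<\infty$ is precisely what renders both the variance bound and the uniform integrability uniform in $p$; once this is in place, the remaining ingredients (the convexity lemma, coercivity via $X=YU$, and the interior-minimizer argument) are comparatively routine.
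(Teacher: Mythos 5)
Your proposal is correct, and it reaches the same conclusion by the same overall skeleton as the paper (pointwise convergence of the empirical objective, upgraded to local uniform convergence via convexity, plus an argument that the minimizers cannot escape to infinity), but the implementation differs in two ways worth noting. First, where you decompose $q_n-\bar q$ into the empirical fluctuation $q_n-\bar q_p$ (Chebyshev, with the variance bounded uniformly in $p$ by the moment hypothesis) and the population drift $\bar q_p-\bar q$ (weak convergence plus uniform integrability), the paper packages the same two steps into an epi-convergence framework, invoking a triangular-array law of large numbers together with results of Attouch--Wets type and a theorem of Dupa\v{c}ov\'a--Wets to pass from epi-convergence of the objectives to convergence of the optimal values; your route is more elementary and self-contained, at the cost of not yielding the set-convergence of minimizers that the epi-convergence machinery provides. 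Second, and more substantively, the paper controls the minimizers by proving that $G(\pmb{\lambda})=\bar q(\lambda)$ is \emph{strongly} convex and then running a discretization-plus-Chebyshev covering argument to show $|\widehat{\pmb{\lambda}}_n-\pmb{\lambda}_0|=\mathcal{O}_P(1)$; you instead prove only coercivity of $\bar q$ along rays (via $aY+bX=Y(a+bU)$ and the independence of $Z$) and then trap the empirical minimum inside a fixed ball by the standard interior-minimizer argument for convex functions. Your weaker property suffices for convergence of the minimum \emph{values}, which is all Theorem \ref{thm:2} asserts, and it sidesteps the delicate approximation steps in the paper's strong-convexity lemma; the paper's stronger property additionally delivers uniqueness of $\pmb{\lambda}_0$ and a rate $\mathcal{O}_P(n^{-1/4})$ for the empirical minimizers, which are used elsewhere. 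One small caveat: your coercivity step asserts that $U$ is absolutely continuous with full support, which the assumptions do not guarantee (only that $U$ is the distributional limit of the projections, with $\Var U=\alpha_0^2>0$ by uniform integrability); however, all you actually need is $\mathbb{P}(a+bU\neq 0)>0$ for $(a,b)\neq\pmb{0}$ together with $\sigma\in(0,1)$, both of which follow from non-degeneracy of $U$, so this is an over-claim in the write-up rather than a gap in the argument.
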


In \cite{CS18}, the authors proved Theorem \ref{thm:2} in the setting of the logistic regression by a bare-hands argument. 
One can adapt their argument to prove Theorem \ref{thm:2}, with possibly more complications.
However, the statement of Theorem \ref{thm:2} suggests it be a form of stochastic approximation.
Here we show how this result follows systematically from stochastic approximation, which is of independent interest.

\smallskip
\noindent
\fbox{{\em Stochastic approximation}}
We sketch a proof of Theorem \ref{thm:2} via a stochastic approximation. 
In the stochastic approximation literature \cite{KW52, RM51}, people seek to approximate the optimization problem
\begin{equation}
\label{eq:opt}
\min_{\pmb{\lambda} \in S} G(\pmb{\lambda}), \quad G(\pmb{\lambda}): = \mathbb{E} g(\pmb{\lambda}, \pmb{\xi}),
\end{equation}
where $S \subset \mathbb{R}^k$ for some $k > 0$ and $\pmb{\xi}$ is a generic random vector, by a sequence of stochastic optimization problems 
$\min_{\pmb{\lambda} \in S} \widehat{G}_n(\pmb{\lambda}; \pmb{\xi}_1, \ldots, \pmb{\xi}_n)$,  $\widehat{G}_n(\pmb{\lambda}; \pmb{\xi}_1, \ldots, \pmb{\xi}_n): = \frac{1}{n} \sum_{i = 1}^n g(\pmb{\lambda}, \pmb{\xi}_i)
$
where $(\pmb{\xi}_i; \, i \ge 1)$ are i.i.d. copies of $\pmb{\xi}$.

A deep connection between stochastic approximation and convergence of random closed sets was established by Attouch and Wets \cite{AW81} via the {\em epi-convergence} of functions. 
A sequence of lower semi-continuous functions $f_n: \mathbb{R}^{k} \rightarrow (\infty, \infty]$ is said to epi-converges to $f$ if for each $x \in \mathbb{R}^k$,
\begin{itemize}
\item
$\lim \inf f_n(x_n) \rightarrow f(x)$ if $x_n \rightarrow x$,
\item
$\lim  f_n(x_n) \rightarrow f(x)$ for at least one sequence $x_n \rightarrow x$.
\end{itemize}
See \cite{AR95, Attoch84, DW88, Hess96, KW91, Sh91} for further development on epi-convergence. 

 Here we consider a sequence of stochastic optimization problems with triangular arrays
\begin{equation}
\label{eq:stopt2}
\min_{\pmb{\lambda} \in S} \widehat{G}_n(\pmb{\lambda}; \pmb{\xi}_{1,n}, \ldots, \pmb{\xi}_{n,n}), \quad \widehat{G}_n(\pmb{\lambda}; \pmb{\xi}_{1,n}, \ldots, \pmb{\xi}_{n,n}): = \frac{1}{n} \sum_{i = 1}^n g(\pmb{\lambda}, \pmb{\xi}_{i,n}),
\end{equation}
where $(\pmb{\xi}_{i,n}; \, 1 \le i \le n)$ are i.i.d. copies of $\pmb{\xi}_n$, and $\pmb{\xi}_n$ converges in distribution to $\pmb{\xi}$.
Let $v$, $\widehat{v}_n$, and $\argmin G$, $\argmin \widehat{G}_n$ be optimal values, and optimal solutions to the problems \eqref{eq:opt}-\eqref{eq:stopt2}.
Note that $\argmin \, G$ and $\argmin \, \widehat{G}_n$ are set-valued.
The following result gives asymptotic inference of $\widehat{v}_n$ as $n \rightarrow \infty$.
The proof will be given in Section \ref{B3}.

\begin{lemma}
\label{lem:sa}
Assume that 
$g(\cdot, \cdot)$ is measurable and bounded from below, and $g(\cdot, \pmb{\xi})$ is convex.
Assume that $\sup_n \mathbb{E}g^4(\pmb{\lambda}, \pmb{\xi}_{n}) < \infty$ and $\mathbb{E}g(\pmb{\lambda}, \pmb{\xi}_n) \rightarrow \mathbb{E}g(\pmb{\lambda}, \pmb{\xi})$ for all $\pmb{\lambda}$.
Further assume that $\argmin \, \widehat{G}_n$, $n \ge 1$ are non-empty and bounded in probability.
Then $\argmin \, G \ne \emptyset$, and 
\begin{equation*}
\widehat{v}_n \rightarrow v \quad \mbox{in probability}.
\end{equation*}
\end{lemma}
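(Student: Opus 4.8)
The plan is to show that the random convex functions $\widehat{G}_n$ epi-converge to $G$ and then to read off the conclusion from the variational properties of epi-convergence in the Attouch--Wets framework recalled above: once $\widehat{G}_n$ epi-converges to $G$ and the minimizers are suitably tight, one automatically gets $\widehat{v}_n \to v$ and $\argmin G \ne \emptyset$. Because $g(\cdot,\pmb{\xi})$ is convex (so each $\widehat{G}_n$ is convex, and $G=\mathbb{E}g(\cdot,\pmb{\xi})$ is convex and, by the bounded-below and measurability hypotheses, well defined), the epi-convergence will not have to be checked by hand: for convex functions it follows from pointwise convergence on a countable dense set, which is ultimately a law-of-large-numbers statement for the triangular array $(\pmb{\xi}_{i,n})$. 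So the whole argument splits into (i) a pointwise LLN, (ii) an upgrade to epi-convergence via convexity, and (iii) extraction of the convergence of optimal values.

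\emph{Step 1 (pointwise convergence).} Fix $\pmb{\lambda}\in S$. Within row $n$ the summands $g(\pmb{\lambda},\pmb{\xi}_{i,n})$, $1\le i\le n$, are i.i.d.\ with mean $\mathbb{E}g(\pmb{\lambda},\pmb{\xi}_n)\to G(\pmb{\lambda})$. Writing $W_{i,n}:=g(\pmb{\lambda},\pmb{\xi}_{i,n})-\mathbb{E}g(\pmb{\lambda},\pmb{\xi}_n)$, the hypothesis $\sup_n\mathbb{E}g^4(\pmb{\lambda},\pmb{\xi}_n)<\infty$ gives $\sup_n\mathbb{E}W_{1,n}^4<\infty$, and expanding the fourth moment of a centered i.i.d.\ sum yields $\mathbb{E}\bigl(\tfrac1n\sum_i W_{i,n}\bigr)^4=O(n^{-2})$. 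This bound is summable in $n$, so Borel--Cantelli gives $\widehat{G}_n(\pmb{\lambda})\to G(\pmb{\lambda})$ almost surely. Intersecting these almost-sure events over a countable dense set $D\subset S$ produces a single event of full probability on which $\widehat{G}_n(\pmb{\lambda})\to G(\pmb{\lambda})$ for every $\pmb{\lambda}\in D$. This is precisely where the triangular-array structure enters: the fourth-moment assumption is what converts the weak law into almost-sure convergence uniformly across the row.

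\emph{Step 2 (epi-convergence and values).} On the full-probability event of Step 1 I would invoke the classical fact that pointwise convergence of finite convex functions on a dense set implies epi-convergence to the (convex) limit, equivalently uniform convergence on compact subsets of the interior of the domain; hence $\widehat{G}_n$ epi-converges to $G$. The recovery-sequence property applied at near-minimizers of $G$ gives the easy bound $\limsup_n\widehat{v}_n\le v$. For the matching lower bound I would use the assumption that $\argmin\widehat{G}_n$ is bounded in probability. Given any subsequence, tightness lets me pass to a further subsequence along which minimizers $\pmb{\lambda}_n\in\argmin\widehat{G}_n$ remain in a fixed compact set almost surely; extracting a convergent $\pmb{\lambda}_n\to\pmb{\lambda}^\star$ and applying the $\liminf$ inequality of epi-convergence gives $\liminf_n\widehat{v}_n=\liminf_n\widehat{G}_n(\pmb{\lambda}_n)\ge G(\pmb{\lambda}^\star)\ge v$, so that $\pmb{\lambda}^\star\in\argmin G$ (in particular $\argmin G\ne\emptyset$) and $\widehat{v}_n\to v$ almost surely along the subsequence. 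Since every subsequence admits a further subsequence on which $\widehat{v}_n\to v$, the subsequence principle delivers $\widehat{v}_n\to v$ in probability.

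I expect the main obstacle to be the interaction in Step 2 between the \emph{almost-sure} epi-convergence coming from Steps 1--2 and the merely \emph{in-probability} tightness of the random minimizers. Epi-convergence alone only controls one side, $\limsup_n\widehat{v}_n\le v$; without compactness one cannot exclude the infimum drifting away or $\argmin G$ being empty. The fix is to route everything through the subsequence principle and to exploit the boundedness-in-probability hypothesis to confine the minimizers to a compact set along almost-surely epi-convergent subsequences, which is exactly what makes the two one-sided bounds meet at $v$.
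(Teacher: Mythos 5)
Your proposal is correct and follows essentially the same route as the paper: a fourth-moment law of large numbers for the triangular array, an upgrade to epi-convergence via convexity and locally uniform convergence, and then the variational properties of epi-convergence combined with the boundedness in probability of $\argmin \widehat{G}_n$ to conclude $\widehat{v}_n \rightarrow v$. The only difference is that you spell out the Borel--Cantelli argument and the $\liminf$/$\limsup$ bookkeeping that the paper delegates to the cited results of Artstein--Wets and Dupa\v{c}ov\'a--Wets.
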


To prove Theorem \ref{thm:2}, we need to show that the set of minimizers $\argmin \, \widehat{G}_n$ is non-empty and bounded in probability.
The argument is similar in spirit to \cite{CS18}, and we give the proof for ease of reference.
In Section \ref{B3},  we prove that under the assumptions in Theorem \ref{thm:main}:
\begin{itemize}
\item
The problem \eqref{eq:opt} has a unique minimizer $\pmb{\lambda}_0$.
\item
For any minimizer $\widehat{\pmb{\lambda}}_n \in \argmin \, \widehat{G}_n$, $|\widehat{\pmb{\lambda}}_n - \pmb{\lambda}_0| = \mathcal{O}_P(1)$.
Here we use the Chebyshev inequality instead of the concentration inequality for sub-exponential variables in \cite{CS18}.
\end{itemize}

\smallskip
\noindent
\fbox{\em Proof of Theorem \ref{thm:main}}
The proof goes along the same line as \cite{CS18}, with two modifications. 
Assume that $\kappa > h_{\text{MLE}}(\alpha_0, \beta_0,\gamma_0)$.
\begin{itemize}
\item
Given $(Y^{(p)}, X^{(p)})$, the random vector $(X_2, \ldots, X_p)$ is also elliptical whose distribution is given by \eqref{eq:joint}. 
By the geometric characterization \eqref{eq:geo}, we get
\begin{equation*}
\mathbb{P}(\mathcal{L} \cap \mathcal{C}(\mathcal{W}) \ne \{\pmb{0}\}) \ge \mathbb{P}(p/n > \mathbb{E}(Q_{p.n}|\pmb{X}, \pmb{Y} )+ a_n n^{-1/2} ) - \varepsilon_n,
\end{equation*}
for some $\varepsilon_n \rightarrow 0$.
\item
The random variable $Q_{p,n}$ is uniformly integrable, since $Q_{p,n} \le |\pmb{Z}_{+}|^2/n$ which is sub-exponential. 
This implies that $\mathbb{E}(Q_{p,n}|\pmb{X}, \pmb{Y} )$ converges in probability to $h_{\text{MLE}}(\alpha_0, \beta_0,\gamma_0)$.
\end{itemize}
Thus, $\mathbb{P}(\mathcal{L} \cap \mathcal{C}(\mathcal{W}) \ne \{\pmb{0}\}) \rightarrow 1$ and $\mathbb{P}(\mbox{MLE exists}) \rightarrow 0$. 
Similarly, we can prove if $\kappa < h_{\text{MLE}}(\alpha_0, \beta_0,\gamma_0)$, then $\mathbb{P}(\mbox{MLE exists}) \rightarrow 1$. 

\subsection{Proof of Proposition \ref{prop:41}} \label{B1}
We aim to prove that 
\begin{equation}
\label{eq:star1}
\mathbb{P}(\mbox{No MLE}) = \mathbb{P}(\mbox{No MLE Single}) + \mathbb{P}(\mathcal{L} \cap \mathcal{C}(\mathcal{W}) \ne \{\pmb{0}\} \mbox{ and } \{\mbox{No MLE Single}\}^c),
\end{equation}
from which the result follows.
If $\{\mbox{No MLE Single}\}$ occurs, there is no MLE. 
Assume that $\{\mbox{No MLE Single}\}$ does not occur. 
If
\begin{equation}
\label{eq:star2}
\mathbb{P}(\mbox{no MLE}) = \mathbb{P}(\mbox{Span}(\pmb{Y}^{(p)}, \pmb{X}^{(p)}, \pmb{X}_2, \ldots \pmb{X}_p) \cap \mathbb{R}_{+}^n \ne \{\pmb{0}\}),
\end{equation}
then there is no MLE if and only if there is a non-zero vector $(b_0, \ldots, b_p)$ such that
$b_0\pmb{Y}^{(p)} + b_1 \pmb{X}^{(p)} + \cdots + b_p \pmb{X}_p = \pmb{u}$, $\pmb{u} \ge 0, \pmb{u} \ne 0$.
By assumption, $b_0\pmb{Y}^{(p)} + b_1 \pmb{X}^{(p)} \ne u$ so $b_2 \pmb{X}_2 + \cdots b_p \pmb{X}_p$ is a non-zero element of $\mathcal{C}(\mathcal{W})$. This leads to \eqref{eq:star1}.
Note that there is no MLE if and only if there is a non-zero vector $(b_0, \ldots, b_p)$ such that
$b_0\pmb{Y}^{(p)} + b_1 \pmb{X}^{(p)} + \cdots + b_p \pmb{X}_p \ge 0$.
The identity in law (9) implies that the equality occurs with probability $0$, which proves \eqref{eq:star2}.

\subsection{Proof of Proposition \ref{prop2}} \label{B2}
Let $(X_1, \ldots, X_n)$ be i.i.d. samples with density $f_{X^{(p)}}$.
It is well known that the distribution of the order statistics $(X_{(1)}, \ldots, X_{(n)})$ is given by 
$n! \, \prod_{i=1}^n f_{X^{(p)}}(x_i)$ for $x_1 < \cdots < x_n$.
Note that there exists $t \in \mathbb{R}$ separating $X_{(1)} < \cdots < X_{(n)}$ if and only if for some $k \in \{0, \ldots, n\}$, 
the responses corresponding to $X_{(1)}, \ldots, X_{(k)}$ is of the same sign, and those corresponding to $X_{(k+1)}, \ldots, X_{(n)}$ is of the opposite sign.
Consequently,
\begin{align*}
& \mathbb{P}(\exists t: \mbox{separate } X_1, \ldots, X_n)  \\
&  = \int_{x_1 < \cdots < x_n} \mathbb{P}(\exists t \mbox{ separate } X_1, \ldots X_n| X_{(i)} = x_i  \, \forall i) \cdot n! \,  \prod_{i=1}^n f_{X^{(p)}}(x_i)dx_1 \ldots dx_n \\
& = \int_{x_1 < \cdots < x_n} n! \,  \prod_{i=1}^n f_{X^{(p)}}(x_i) \Bigg( \prod_{i=1}^n  p_{-}(x_i) + \sum_{k=1}^{n-1} \prod_{i=1}^k p_{-}(x_i) \prod_{i = k+1}^n p_{+}(x_i) \\
&  \qquad  \qquad \qquad \qquad  \qquad + \sum_{k=1}^{n-1} \prod_{i=1}^k p_{+}(x_i) \prod_{i = k+1}^n p_{-}(x_i) + \prod_{i=1}^n  p_{+}(x_i) \Bigg) dx_1 \ldots dx_n.
\end{align*}

Moreover, $\int_{x_1 < \cdots < x_n} n! \, \prod_{i=1}^n f_{X^{(p)}}(x_i) \prod_{i=1}^n p_{\pm}(x_i) dx_1\ldots dx_n$ is equal to 
\begin{equation*}
\left(\int_{\mathbb{R}} f_{X^{(p)}}(x) p_{\pm}(x)dx\right)^n = (\mathbb{E}p_{\pm}(X^{(p)}) )^n.
\end{equation*}
For $1 \le k \le n-1$, 
\begin{align*}
& \int_{x_1 < \cdots < x_n} n! \, \prod_{i=1}^n f_{X^{(p)}}(x_i) \prod_{i=1}^k p_{-}(x_i) \prod_{i = k+1}^n p_{+}(x_i) dx_1\ldots dx_n  \\
& = n! \int_{x_{k+1} = - \infty}^{\infty} f_{X^{(p)}}(x_{k+1}) p_{+}(x_{k+1}) dx_{k+1} \\
& \qquad \qquad \qquad \qquad \qquad  \left( \int_{x_1 < \cdots < x_{k+1}} \prod_{i=1}^k f_{X^{(p)}}(x_i)  p_{-}(x_i) dx_1 \ldots dx_k \right) \\
& \qquad \qquad \qquad \qquad \qquad  \left( \int_{x_n > \cdots > x_{k+1} } \prod_{i=k+2}^n f_{X^{(p)}}(x_i)  p_{+}(x_i) dx_{k+2} \ldots dx_n  \right)  \\
& = \frac{n!}{k! (n-k-1)!} \int_{x_{k+1} = - \infty}^{\infty} f_{X^{(p)}}(x_{k+1})p_{+}(x_{k+1})^k p_{-}(x_{k+1}) \overline{G}^{n-k-1}_{p,+}(x_{k+1}) dx_{k+1} \\
& = n \binom{n-1}{k} \mathbb{E}[p_{+}(X^{(p)}) G^k_{p,-}(X^{(p)}) \overline{G}^{n-k-1}_{p,+}(X^{(p)})].
\end{align*}
Combining the above identities yields
\begin{align}
& \mathbb{P}(\exists t \mbox{ separate } X_1, \ldots, X_n) \notag \\ 
& = (\mathbb{E}p_{-}(X^{(p)}) )^n + n \sum_{k=1}^n \binom{n-1}{k} \mathbb{E}[p_{+}(X^{(p)}) G^k_{p,-}(X^{(p)}) \overline{G}^{n-k-1}_{p,+}(X^{(p)})] \notag \\
& \qquad   + n \sum_{k=1}^n \binom{n-1}{k} \mathbb{E}[p_{-}(X^{(p)}) G^k_{p,+}(X^{(p)}) \overline{G}^{n-k-1}_{p,-}(X^{(p)})] + (\mathbb{E}p_{+}(X^{(p)}) )^n \notag \\
& = (\mathbb{E}p_{-}(X^{(p)}) )^n + n \mathbb{E}[p_{+}(X^{(p)}) (G_{p,-}(X^{(p)}) + \overline{G}_{p,+}(X^{(p)}))^{n-1}]  \notag \\
&  \qquad   +  n \mathbb{E}[p_{-}(X^{(p)}) (G_{p,+}(X^{(p)}) + \overline{G}_{p,-}(X^{(p)})^{n-1}] + (\mathbb{E}p_{+}(X^{(p)}) )^n.   \label{eq:sstd}
\end{align}
Finally, the condition \eqref{eq:pG} together with \eqref{eq:sstd} lead to the desired result.

\subsection{Proof of Theorem \ref{thm:2}} \label{B3}
We start with the proof of Lemma \ref{lem:sa}.

\begin{proof}[Proof of Lemma \ref{lem:sa}]
By law of large numbers of triangular arrays,  the condition $\sup_i \mathbb{E}g^4(\pmb{\lambda}, \pmb{\xi}_{i,n}) < \infty$ implies that $\frac{1}{n} \sum_{i = 1}^n g(\pmb{\lambda}, \pmb{\xi}_{i,n}) - \mathbb{E} g(\pmb{\lambda}, \pmb{\xi}_n) \rightarrow 0$ a.s. 
It follows from \cite[Theorem 2.3]{AR95} that
\begin{equation*}
\frac{1}{n} \sum_{i = 1}^n g(\pmb{\lambda}, \pmb{\xi}_{i,n}) - \mathbb{E} g(\pmb{\lambda}, \pmb{\xi}_n)\mbox{ epi-converges to } 0 \quad a.s.
\end{equation*}
It is well known that if a sequence of convex functions converge pointwise, then they converge uniformly on compact sets.
Since $\mathbb{E}g(\pmb{\lambda}, \pmb{\xi}_n) \rightarrow \mathbb{E}g(\pmb{\lambda}, \pmb{\xi})$ for all $\pmb{\lambda}$ and $g(\cdot, \pmb{\xi})$ is convex, the convergence is uniform on compact sets.
This implies the epi-convergence. 
Therefore,
\begin{equation*}
\frac{1}{n} \sum_{i = 1}^n g(\pmb{\lambda}, \pmb{\xi}_{i,n})  \mbox{ epi-converges to } \mathbb{E} g(\pmb{\lambda}, \pmb{\xi}) \quad a.s.
\end{equation*}
Combining with \cite[Proposition 3.3]{DW88} yields the desired result.
\end{proof}

Now we are ready to prove Theorem \ref{thm:2}. 
We specialize to $\pmb{\lambda} = (\lambda_0, \lambda_1)$, $\pmb{\xi}_n= (Y^{(p)},X^{(p)},Z)$ with $(Y^{(p)},X^{(p)}) \sim F_{\alpha_0,\beta_0,\gamma_0}$, $\pmb{\xi} = (Y, X, Z)$, 
and $Z \sim \mathcal{N}(0,1)$ independent of $\{(Y^{(p)},X^{(p)}), (Y,X) \}$, and 
\begin{equation}
\label{eq:fctg}
g(\pmb{\lambda}, \pmb{\xi}) = (\lambda_0 Y + \lambda_1 X - Z)_{+}^2.
\end{equation}
It is clear that the function $g$ defined by \eqref{eq:fctg} is measurable and non-negative, and $g(\cdot, \pmb{\xi})$ is convex. 
It follows from $\sup_p \mathbb{E}[(X^{(p)})^8] < \infty$ that $\sup_n g^4(\pmb{\lambda}, \pmb{\xi}_n) < \infty$.
By Assumption \ref{assumpt3}, $\mathbb{E}[(X^{(p)})^2]$ converges to $\alpha_0^2$, and by Assumption \ref{assumpt4}, $(Y^{(p)}, X^{(p)})$ converges in distribution to $(Y,X)$.
Now by \cite[Lemma 8.3]{BF81}, we get $\mathbb{E}g(\pmb{\lambda}, \pmb{\xi}_n) \rightarrow \mathbb{E}g(\pmb{\lambda}, \pmb{\xi})$ for all $\pmb{\lambda}$.

Let $\pmb{\lambda}_{\min}$ be a minimum of $\mathbb{E}g(\pmb{\lambda}, \pmb{\xi})$.
By convexity of $\pmb{\lambda} \rightarrow \mathbb{E}g(\pmb{\lambda}, \pmb{\xi})$, there exists $r > 0$ such that
$\mathbb{E}g(\pmb{\lambda}_{\min}, \pmb{\xi}) < \min_{r \le |\pmb{\lambda}| \le r + 1} \mathbb{E}g(\pmb{\lambda}, \pmb{\xi})$, and 
$\min_{\pmb{\lambda}} \mathbb{E}g(\pmb{\lambda}, \pmb{\xi}) = \min_{|\pmb{\lambda}| \le r + 1} \mathbb{E}g(\pmb{\lambda}, \pmb{\xi})$.
Note that $\mathbb{E}g(\pmb{\lambda}, \pmb{\xi}_n)$ converges uniformly to $\mathbb{E}g(\pmb{\lambda}, \pmb{\xi})$ on $\{\pmb{\lambda}: r \le |\pmb{\lambda}| \le r+1\}$.
So for $p$ large enough, 
$\mathbb{E}g(\pmb{\lambda}_{\min}, \pmb{\xi}_n) < \min_{r \le |\pmb{\lambda}| \le r+1} \mathbb{E}g(\pmb{\lambda}, \pmb{\xi}_n)$ and
$\min_{\pmb{\lambda}} \mathbb{E}g(\pmb{\lambda}, \pmb{\xi}_n) = \min_{r \le |\pmb{\lambda}| \le r+1} \mathbb{E}g(\pmb{\lambda}, \pmb{\xi}_n)$.
Now by \cite[Theorem 2.1]{KS83}, we have as $p \rightarrow \infty$,
\begin{equation*}
   \min_{\pmb{\lambda}}\mathbb{E}g(\pmb{\lambda}, \pmb{\xi}_n) = \min_{|\pmb{\lambda}| \le r + 1}\mathbb{E}g(\pmb{\lambda}, \pmb{\xi}_n) \rightarrow \min_{|\pmb{\lambda}| \le r + 1}\mathbb{E}g(\pmb{\lambda}, \pmb{\xi}) = \min_{\pmb{\lambda}}\mathbb{E}g(\pmb{\lambda}, \pmb{\xi}).
\end{equation*}

By Lemma \ref{lem:sa}, it suffices to prove that the set of minimizers $\argmin \widehat{G}_n$ is non-empty and bounded in probability. 
We aim to show that under the assumptions in Theorem \ref{thm:main}, the problem $(13)$ has a unique minimizer $\pmb{\lambda}_0$, and for any minimizer $\widehat{\pmb{\lambda}}_n \in \argmin \widehat{G}_n$, $|\widehat{\pmb{\lambda}}_n - \pmb{\lambda}_0| = \mathcal{O}_P(1)$.

We prove these statements in the next two lemmas.
It is easily seen that $G(\pmb{\lambda})$ and $\widehat{G}_n(\pmb{\lambda})$ are convex.
The following lemma shows that the function $G$ is strongly convex, which was stated in \cite{CS18} without proof.
Here we give a complete proof.

\begin{lemma}
Under the assumptions in Theorem \ref{thm:main}, the function $\pmb{\lambda} \rightarrow G(\pmb{\lambda})$ with $g(\cdot)$ defined by \eqref{eq:fctg} is strongly convex. That is, there exists $\alpha_1 > \alpha_0 > 0$ such that 
\begin{equation}
\label{eq:SCV}
\alpha_0 \pmb{I}_2 \preceq \nabla^2 G(\pmb{\lambda}) \preceq \alpha_1 \pmb{I}_2.
\end{equation}
\end{lemma}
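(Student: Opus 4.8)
The plan is to derive an explicit integral formula for $\nabla^2 G$ and then read off matching, $\pmb{\lambda}$-uniform, lower and upper bounds. Write $\mu := \lambda_0 Y + \lambda_1 X$ and integrate out $Z \sim \mathcal{N}(0,1)$ first, using its independence from $(Y,X)$: setting $h(\mu) := \mathbb{E}_Z (\mu - Z)_{+}^2 = \int_{-\infty}^{\mu}(\mu - z)^2 \phi(z)\,dz$ with $\phi = \Phi'$, a direct differentiation gives $h'(\mu) = 2\int_{-\infty}^{\mu}(\mu - z)\phi(z)\,dz$ and $h''(\mu) = 2\Phi(\mu)$, so $h \in C^{\infty}$ with $0 \le h'' \le 2$. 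Since $G(\pmb{\lambda}) = \mathbb{E}\,h(\lambda_0 Y + \lambda_1 X)$ and $\mathbb{E}[Y^2], \mathbb{E}[X^2] < \infty$, dominated convergence justifies differentiating twice under the expectation, which yields
\begin{equation*}
\nabla^2 G(\pmb{\lambda}) = 2\,\mathbb{E}\!\left[\Phi(\lambda_0 Y + \lambda_1 X)\begin{pmatrix} Y^2 & XY \\ XY & X^2\end{pmatrix}\right].
\end{equation*}
The upper bound is then immediate: since $\Phi \le 1$ and the matrix is positive semidefinite, $\nabla^2 G(\pmb{\lambda}) \preceq 2\,\mathbb{E}[(Y,X)^T(Y,X)]$, so I may take $\alpha_1$ to be its top eigenvalue, finite because the second moments are finite.

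The crux is the uniform lower bound, and here I would exploit the structure of $F_{\alpha_0,\beta_0,\gamma_0}$. By \eqref{eq:projected_V_U}, in the limit $Y = V \in \{-1,1\}$ and $X = VU$, so that $X = YU$, hence $\mu = Y(\lambda_0 + \lambda_1 U)$, and for any $\pmb{v} = (v_0,v_1)$ one has $(v_0 Y + v_1 X)^2 = (v_0 + v_1 U)^2$. Conditioning on $U$ and using $\mathbb{P}(Y = 1 \mid U) = p_{+}(U)$, $\mathbb{P}(Y = -1 \mid U) = p_{-}(U)$, I obtain
\begin{equation*}
\tfrac12\,\pmb{v}^T \nabla^2 G(\pmb{\lambda})\,\pmb{v} = \mathbb{E}\!\left[(v_0 + v_1 U)^2\big(p_{+}(U)\Phi(\lambda_0 + \lambda_1 U) + p_{-}(U)\Phi(-\lambda_0 - \lambda_1 U)\big)\right].
\end{equation*}
The key observation is the reflection identity $\Phi(s) + \Phi(-s) = 1$, from which the bracketed weight is bounded below by $\rho(U) := \min\{p_{+}(U), p_{-}(U)\}$ \emph{independently of} $\pmb{\lambda}$. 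This converts the Gaussian weight $\Phi(\mu)$ — which would otherwise degenerate to $0$ on a large region as $\|\pmb{\lambda}\| \to \infty$ — into a $\pmb{\lambda}$-free lower bound.

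Consequently $\nabla^2 G(\pmb{\lambda}) \succeq N := 2\,\mathbb{E}[\rho(U)(1,U)^T(1,U)]$ for every $\pmb{\lambda}$, and it remains to check $N \succ 0$. Since $0 < \sigma < 1$ for the links of interest, $\rho(U) > 0$ almost surely; if $\pmb{v}^T N \pmb{v} = 0$, then $(v_0 + v_1 U)^2 = 0$ a.s., which forces $v_1 = 0$ (as $U$ is non-degenerate, its variance converging to $\alpha_0^2 > 0$ under Assumption \ref{assumpt3}) and then $v_0 = 0$. Hence $N$ is positive definite and I may take $\alpha_0 := \lambda_{\min}(N) > 0$, with $\alpha_0 \le \alpha_1$ automatic. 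The main obstacle is not any single estimate but recognizing that the structural identity $X = YU$ together with the symmetry $\Phi(s)+\Phi(-s)=1$ is precisely what delivers a lower bound free of $\pmb{\lambda}$; without it one is pushed into a more delicate compactness argument controlling the limiting matrices $\lim_{t\to\infty}\nabla^2 G(t\pmb{u})$ uniformly over directions $\pmb{u}$ on the unit circle.
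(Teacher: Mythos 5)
Your proof is correct, and it takes a genuinely different route from the paper's. Both start from the same Hessian formula $\nabla^2 G(\pmb{\lambda}) \propto \mathbb{E}\big[\Phi(\lambda_0 Y + \lambda_1 X)\,(Y,X)^T(Y,X)\big]$ and both pass to the $(V,U)$ representation, but they diverge at the crucial lower bound. The paper splits the expectation into its $p_{+}$ and $p_{-}$ pieces, analyzes the asymptotic behavior of each as $\lambda_0,\lambda_1 \to \infty$ (approximating $\Phi(\lambda_0+\lambda_1 U)$ by $\mathbb{E}p_{+}(U)$ or by a truncated indicator depending on the ratio $\lambda_0/\lambda_1$), and then patches the bounded region by a continuity argument — exactly the ``delicate compactness argument controlling $\lim_{t\to\infty}\nabla^2 G(t\pmb{u})$ over directions'' that you anticipated and avoided. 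Your reflection trick — using $Y^2=1$, $X=YU$ to factor out $(v_0+v_1U)^2$, then bounding $p_{+}(U)\Phi(s)+p_{-}(U)\Phi(-s) \ge \min\{p_{+}(U),p_{-}(U)\}$ via $\Phi(s)+\Phi(-s)=1$ — yields a single $\pmb{\lambda}$-free minorant matrix $N$ in one step, which is both shorter and more rigorous than the paper's sketch (the paper's large-$\pmb{\lambda}$ approximations and the final continuity step are not fully justified there). One small caveat applies equally to both arguments: positive definiteness of your $N$ (and the paper's strict convexity) requires $\min\{p_{+}(U),p_{-}(U)\}>0$ on a non-degenerate part of the law of $U$, i.e.\ $0<\sigma<1$; this holds for the logit, probit and cloglog links but is not literally a consequence of the log-concavity in Assumption \ref{assumpt1}, so it is worth stating explicitly. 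You also correctly flag that non-degeneracy of $U$ (via $\Var(U)=\alpha_0^2>0$, using the eighth-moment condition for uniform integrability) is what rules out $v_0+v_1U=0$ a.s.
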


\begin{proof}
Elementary analysis shows that
\begin{align*}
\nabla^2 G(\pmb{\lambda}) &=
\begin{pmatrix}
    \mathbb{E}[Y^2 \Phi(\lambda_0 Y + \lambda_1 X)] & \mathbb{E}[YX \Phi(\lambda_0 Y + \lambda_1 X)] \\
   \mathbb{E}[YX \Phi(\lambda_0 Y + \lambda_1 X)] & \mathbb{E}[X^2 \Phi(\lambda_0 Y + \lambda_1 X)]
\end{pmatrix} \\
& = 
\begin{pmatrix}
    \mathbb{E}[ \Phi(\lambda_0 V + \lambda_1 VU)] & \mathbb{E}[U \Phi(\lambda_0 V + \lambda_1 VU)] \\
   \mathbb{E}[U \Phi(\lambda_0 V + \lambda_1 VU)] & \mathbb{E}[U^2 \Phi(\lambda_0 Y + \lambda_1 VU)]
\end{pmatrix},
\end{align*}
where $\Phi(\cdot)$ is the CDF of standard normal, $U$ is defined in Assumption \ref{assumpt4}, and $\mathbb{P}(V=1|U) = 1 - \mathbb{P}(V = -1|U) = p_{+}(U)$.
The r.h.s. of \eqref{eq:SCV} is clear. 
By Cauchy-Schwarz inequality, $\det \nabla^2 G(\pmb{\lambda}) \ge 0$. 
If $\det \nabla^2 G(\pmb{\lambda}) = 0$, then $U$ is constant almost surely which violates the non-degeneracy of $U$.
Thus, $G(\pmb{\lambda})$ is strictly convex.

Note that 
$$\mathbb{E}[ \Phi(\lambda_0 V + \lambda_1 VU)] = \mathbb{E}[\Phi(\lambda_0 + \lambda_1 X) p_{+}(X)] + \mathbb{E}[\Phi(-\lambda_0 - \lambda_1 X) p_{-}(X)],$$ 
and the decomposition holds for other terms. So
\begin{align}
\label{eq:decomp}
\nabla^2 G(\pmb{\lambda}) &=
\begin{pmatrix}
    \mathbb{E}[\Phi(\lambda_0  + \lambda_1 U) p_{+}(U)] & \mathbb{E}[U \Phi(\lambda_0  + \lambda_1 U) p_{+}(U)] \\
   \mathbb{E}[U \Phi(\lambda_0  + \lambda_1 U) p_{+}(U)] & \mathbb{E}[U^2 \Phi(\lambda_0  + \lambda_1 U) p_{+}(U)]
\end{pmatrix} \notag \\
& \qquad \qquad \qquad + 
\begin{pmatrix}
    \mathbb{E}[\Phi(\lambda_0  + \lambda_1 U) p_{-}(U)] & \mathbb{E}[U \Phi(\lambda_0  + \lambda_1 U) p_{-}(U)] \\
   \mathbb{E}[U \Phi(\lambda_0  + \lambda_1 U) p_{-}(U)] & \mathbb{E}[U^2 \Phi(\lambda_0  + \lambda_1 U) p_{-}(U)]
\end{pmatrix}. 
\end{align}
Without loss of generality, consider $\lambda_1, \lambda_2 > 0$. For $\lambda_0, \lambda_1$ sufficiently large, 
\begin{itemize}[itemsep = 5 pt]
\item
  if $\lambda_0/\lambda_1$ is large, then $\mathbb{E}[\Phi(\lambda_0  + \lambda_1 U) p_{+}(U)]$ can be approximated by $\mathbb{E}p_{+}(U)$.
\item
  if $\lambda_0/\lambda_1$ is small, then $\mathbb{E}[\Phi(\lambda_0  + \lambda_1 U) p_{+}(U)]$ can be approximated by $\mathbb{E}[1(U > -\lambda_0/\lambda_1 + \eta)p_{+}(U)]$, where $\eta$ is a fixed small value.
\end{itemize}
The approximation also holds for other terms.
By the strict convexity and the approximation, we can show that there exist $N_{+} > 0$ such that for $\lambda_0, \lambda_1 > N_{+}$,
\begin{equation*}
\begin{pmatrix}
    \mathbb{E}[\Phi(\lambda_0  + \lambda_1 U) p_{+}(U)] & \mathbb{E}[U \Phi(\lambda_0  + \lambda_1 U) p_{+}(U)] \\
   \mathbb{E}[U \Phi(\lambda_0  + \lambda_1 U) p_{+}(U)] & \mathbb{E}[U^2 \Phi(\lambda_0  + \lambda_1 U) p_{+}(U)]
\end{pmatrix}
\succeq \epsilon_{+} \pmb{I}_2,
\end{equation*}
for some $\epsilon_{+} > 0$.
Similarly, there exist $N_{-} > 0$ such that for $\lambda_0, \lambda_1 > N_{-}$, we get a bound $\epsilon_{-}\pmb{I}_2$ for the second term on the r.h.s. of \eqref{eq:decomp}.
Thus, for $\lambda_1, \lambda_2 > \max(N_{+}, N_{-})$, $\nabla^2 G(\pmb{\lambda}) \succeq \min(\epsilon_{+}, \epsilon_{-}) \pmb{I}_2$. 
By continuity of $\det \nabla^2 G(\pmb{\lambda})$, we get $\nabla^2 G(\pmb{\lambda}) \succeq \epsilon' \pmb{I}_2$ for $\lambda_0, \lambda_1 < \max(N_{+}, N_{-})$. 
It suffices to take $\alpha_0 = \min(\epsilon_{+}, \epsilon_{-}, \epsilon')$ to conclude.
\end{proof}

Finally, we prove that the set of minimizers $\argmin \, \widehat{G}_n$ is bounded in probability. 
The argument can be used to show that $|\widehat{\pmb{\lambda}}_n -  \pmb{\lambda}_0| = \mathcal{O}_P(n^{-1/4})$, where $\widehat{\pmb{\lambda}}_n$ is any minimizer of $\widehat{G}_n$.
The proof is adapted from \cite[Lemma 4]{CS18}, which we include for completeness.
\begin{lemma}
Under the assumptions in Theorem \ref{thm:main}, we have $|\widehat{\pmb{\lambda}}_n - \pmb{\lambda}_0| = \mathcal{O}_P(1)$, where $\widehat{\pmb{\lambda}}_n \in \argmin \, \widehat{G}_n$.
\end{lemma}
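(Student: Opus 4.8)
The plan is to combine the strong convexity of $G$ just established with a pointwise law of large numbers for the triangular array $(\pmb{\xi}_{i,n})$, upgrade the latter to uniform convergence on compact sets using convexity, and then localize $\widehat{\pmb{\lambda}}_n$ by a convexity ``caging'' argument. As a preliminary step I would record the quadratic growth bound implied by \eqref{eq:SCV}: since $\nabla^2 G \succeq \alpha_0 \pmb{I}_2$ (with $\alpha_0$ the strong convexity constant of \eqref{eq:SCV}) and $\nabla G(\pmb{\lambda}_0) = \pmb{0}$ at the unique minimizer $\pmb{\lambda}_0$, a second-order Taylor expansion yields $G(\pmb{\lambda}) - G(\pmb{\lambda}_0) \ge \frac{\alpha_0}{2}|\pmb{\lambda} - \pmb{\lambda}_0|^2$ for every $\pmb{\lambda}$.

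Next I would prove pointwise convergence $\widehat{G}_n(\pmb{\lambda}) \to G(\pmb{\lambda})$ in probability. For fixed $\pmb{\lambda}$, the bound $\sup_n \mathbb{E}g^4(\pmb{\lambda}, \pmb{\xi}_n) < \infty$ forces $\sup_n \mathbb{E}g^2(\pmb{\lambda}, \pmb{\xi}_n) < \infty$, so that $\Var \widehat{G}_n(\pmb{\lambda}) = \frac{1}{n}\Var g(\pmb{\lambda}, \pmb{\xi}_n) \le \frac{1}{n}\mathbb{E}g^2(\pmb{\lambda}, \pmb{\xi}_n) \to 0$; Chebyshev's inequality then gives $\widehat{G}_n(\pmb{\lambda}) - \mathbb{E}g(\pmb{\lambda}, \pmb{\xi}_n) \to 0$ in probability, and combined with $\mathbb{E}g(\pmb{\lambda}, \pmb{\xi}_n) \to G(\pmb{\lambda})$ (already shown above via \cite[Lemma 8.3]{BF81}) this yields $\widehat{G}_n(\pmb{\lambda}) \to G(\pmb{\lambda})$ in probability. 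Applying this on a countable dense set and passing to subsequences, I would invoke the principle that pointwise convergence of finite convex functions forces uniform convergence on compact sets to conclude that, for any fixed $r > 0$, $\sup_{|\pmb{\lambda} - \pmb{\lambda}_0| \le r}|\widehat{G}_n(\pmb{\lambda}) - G(\pmb{\lambda})| \to 0$ in probability.

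Then I would localize the minimizer. Fix $\varepsilon > 0$, fix a radius $r$, and let $A_n$ be the event that the above supremum is at most $\eta := \frac{\alpha_0}{4}r^2$, so $\mathbb{P}(A_n) \to 1$. On $A_n$, for every $\pmb{\lambda}$ with $|\pmb{\lambda} - \pmb{\lambda}_0| = r$ the quadratic growth bound gives $\widehat{G}_n(\pmb{\lambda}) \ge G(\pmb{\lambda}_0) + \frac{\alpha_0}{2}r^2 - \eta$, while $\widehat{G}_n(\pmb{\lambda}_0) \le G(\pmb{\lambda}_0) + \eta$, whence $\widehat{G}_n(\pmb{\lambda}) > \widehat{G}_n(\pmb{\lambda}_0)$. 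A ray argument using convexity of $\widehat{G}_n$ finishes: any $\pmb{\mu}$ with $|\pmb{\mu} - \pmb{\lambda}_0| > r$ can be written $\pmb{\mu} = \pmb{\lambda}_0 + s(\pmb{\lambda}^\star - \pmb{\lambda}_0)$ with $s > 1$ and $\pmb{\lambda}^\star$ on the sphere, so $\pmb{\lambda}^\star = (1 - s^{-1})\pmb{\lambda}_0 + s^{-1}\pmb{\mu}$ and convexity gives $\widehat{G}_n(\pmb{\lambda}^\star) \le (1 - s^{-1})\widehat{G}_n(\pmb{\lambda}_0) + s^{-1}\widehat{G}_n(\pmb{\mu})$, which together with $\widehat{G}_n(\pmb{\lambda}^\star) > \widehat{G}_n(\pmb{\lambda}_0)$ forces $\widehat{G}_n(\pmb{\mu}) > \widehat{G}_n(\pmb{\lambda}_0)$. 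Hence no minimizer lies outside $B(\pmb{\lambda}_0, r)$, so $\mathbb{P}(|\widehat{\pmb{\lambda}}_n - \pmb{\lambda}_0| \le r) \ge \mathbb{P}(A_n) \to 1$, which is exactly $|\widehat{\pmb{\lambda}}_n - \pmb{\lambda}_0| = \mathcal{O}_P(1)$; keeping track of the size of $\eta$ and letting $r = r_n \to 0$ shrink at the Chebyshev scale $n^{-1/2}$ refines this to the $\mathcal{O}_P(n^{-1/4})$ rate mentioned in the remark.

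The step I expect to be the main obstacle is the passage from pointwise to uniform convergence on the ball: the difference $\widehat{G}_n - G$ is a difference of convex functions and is not itself convex, so one cannot control the infimum over the sphere by a naive union bound over a net without also controlling a modulus of continuity. The clean resolution is to apply the convex-uniform-convergence principle to $\widehat{G}_n$ directly (rather than to the non-convex difference) and to handle the in-probability, as opposed to almost-sure, nature of the convergence by arguing along subsequences.
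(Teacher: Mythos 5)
Your argument is correct and reaches the same conclusion by the same overall ``caging'' strategy (quadratic growth of $G$ from \eqref{eq:SCV}, a high-probability event on which $\widehat{G}_n$ exceeds $\widehat{G}_n(\pmb{\lambda}_0)$ on a sphere around $\pmb{\lambda}_0$, then convexity of $\widehat{G}_n$ to exclude minimizers outside the sphere), but the key step --- uniform control of $\widehat{G}_n-G$ on the sphere --- is handled differently. The paper does not pass through uniform convergence of the \emph{values} at all: it fixes a finite net $\{\pmb{\lambda}_i\}$ on the circle $C(x)$, controls the \emph{gradients} $\nabla\widehat{G}_n(\pmb{\lambda}_i)$ at the net points by Chebyshev and a union bound (the event $B$ and \eqref{eq:boundB}), and interpolates between net points via the first-order convexity inequality \eqref{eq:lb}; values of $\widehat{G}_n$ are then only needed at the finitely many points $\pmb{\lambda}_0,\pmb{\lambda}_1,\dots,\pmb{\lambda}_d$, each controlled by Chebyshev. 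You instead invoke the classical convexity lemma (pointwise convergence of finite convex functions on a dense set implies uniform convergence on compacts), upgraded from in-probability to almost-sure convergence along subsequences. That route is valid --- and you correctly identify the trap of applying the lemma to the non-convex difference $\widehat{G}_n-G$ rather than to $\widehat{G}_n$ itself --- and it is arguably cleaner for the qualitative claim $\mathcal{O}_P(1)$. What the paper's more hands-on version buys is explicit, quantitative error probabilities in terms of $d$, $n$, $x$ and $\sigma^2$, which is what actually delivers the $\mathcal{O}_P(n^{-1/4})$ rate asserted in the surrounding remark; your soft subsequence argument would have to be made quantitative (essentially by reverting to a net) to recover that rate, so your closing sentence about ``letting $r=r_n\to 0$ shrink at the Chebyshev scale'' is the one place where the sketch understates the work required. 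Two cosmetic points: take $\eta$ strictly smaller than $\frac{\alpha_0}{4}r^2$ (or define $A_n$ with a strict inequality) so that the comparison on the sphere is strict and the ray argument excludes \emph{every} minimizer outside $B(\pmb{\lambda}_0,r)$; and note that existence/uniqueness of $\pmb{\lambda}_0$ is supplied by the preceding strong-convexity lemma, as you implicitly assume.
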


\begin{proof}
For any $\pmb{\lambda} \in \mathbb{R}^2$, the strong convexity \eqref{eq:SCV} gives that
\begin{equation*}
G(\pmb{\lambda}) \ge G(\pmb{\lambda}_0) + \frac{\alpha_0}{2}|\pmb{\lambda} - \pmb{\lambda}_{0}|^2.
\end{equation*}

Fix $x \ge 1$. For any $\pmb{\lambda}$ on the circle $C(x): = \{\pmb{\lambda}: |\pmb{\lambda} -\pmb{\lambda}_{0}| = x\}$, we have
\begin{equation}
\label{eq:incond}
G(\pmb{\lambda}) \ge G(\pmb{\lambda}_{0}) + 3y, \quad y := \frac{\alpha_0 x^2}{6}.
\end{equation}

Fix $z = G(\pmb{\lambda}_{0}) + y$, and consider the event
\begin{equation*}
E: = \left\{   \widehat{G}_n(\pmb{\lambda}_0) < z \mbox{ and } \inf_{\pmb{\lambda} \in C(x)} \widehat{G}_n(\pmb{\lambda}) > z   \right\}.
\end{equation*}

By convexity of $\widehat{G}_n$, when $E$ occurs, $\widehat{\pmb{\lambda}}_{n}$ must lie in the circle. 
Hence, $|\widehat{\pmb{\lambda}}_n - \pmb{\lambda}_0| \le x$.

Next we prove that the event $E$ occurs with high probability.
Fix $d$ equi-spaced point $\{\pmb{\lambda}_i\}_{i = 1}^d$ on the set $C(x)$. 
Take any point $\pmb{\lambda}$ on the circle, and let $\pmb{\lambda}_i$ be its closest point.
So $|\pmb{\lambda} - \pmb{\lambda}_i| \le \pi x/d$.
By convexity of $\widehat{G}_n$,
\begin{equation}
\label{eq:lb}
\widehat{G}_n(\pmb{\lambda}) \ge \widehat{G}_n(\pmb{\lambda}_i) - |\nabla \widehat{G}(\pmb{\lambda}_i)| |\pmb{\lambda} - \pmb{\lambda}_i|.
\end{equation}
Define the event
\begin{equation*}
B: = \left\{\max_i  |\nabla \widehat{G}_n(\pmb{\lambda}_i) - \nabla G(\pmb{\lambda}_i) | \le x   \right\}.
\end{equation*}
By Chebyshev inequality and union bound,  we get
\begin{equation}
\label{eq:boundB}
\mathbb{P}(B^c) \le \frac{d \sigma^2}{n x^2},
\end{equation}
where $\sigma^2 : = \sup_n \Var[g(\pmb{\lambda}, \pmb{\xi}_n)] < \infty$.
As $|\nabla^2 G|$ is bounded and $\nabla G(\pmb{\lambda}_{0}) = 0$, 
\begin{equation*}
|\nabla G(\pmb{\lambda}_i )| \le \alpha_1 |\pmb{\lambda}_i - \pmb{\lambda}_{0}| = \alpha_1 x.
\end{equation*}

Now for $n$ sufficiently large, on $B$ we have
$|\nabla \widehat{G}_n(\pmb{\lambda}_i)| |\pmb{\lambda} - \pmb{\lambda}_i| \le \pi (1+\alpha_1)x^2/d$.
Choose $d > 6 \pi (1+\alpha_1)/\alpha_0$ so that 
$|\nabla \widehat{G}_n(\pmb{\lambda}_i)| |\pmb{\lambda} - \pmb{\lambda}_i| \le y$.
Then it follows from \eqref{eq:lb} that on $B$,
\begin{equation*}
\inf_{\pmb{\lambda} \in C(x)} \widehat{G}_n(\pmb{\lambda}) \ge \min_i \widehat{G}(\pmb{\lambda}_i) - y.
\end{equation*}
Observe that
\begin{equation*}
\widehat{G}_n(\pmb{\lambda}_i ) > G(\pmb{\lambda}_i) - y \Longrightarrow \widehat{G}_n(\pmb{\lambda}_i) - y > G(\pmb{\lambda}_{0}) + y = z,
\end{equation*}
since $G(\pmb{\lambda}_i) \ge G(\pmb{\lambda}_{0}) + 3y$ by \eqref{eq:incond}.
Consequently,
\begin{equation*}
\mathbb{P}(E^c) \le \mathbb{P}(B^c) + \mathbb{P}(\widehat{G}_n(\pmb{\lambda}_0) \ge G(\pmb{\lambda}_{0}) + y) + \sum_{i = 1}^d \mathbb{P}(\widehat{G}_n(\pmb{\lambda}_i ) \le G(\pmb{\lambda}_i) - y),
\end{equation*}
where $\mathbb{P}(B^c)$ is controlled by \eqref{eq:boundB}, and the last two terms can also be bounded by Chebyshev inequality.
\end{proof}

\section{Conclusion}
\label{s5}
In this paper, we proved a phase transition for the existence of the maximum likelihood estimate in high-dimensional generalized linear models 
with elliptical covariates. 
We derived an explicit formula for the phase transition boundary, depending on the regression coefficients and the scaling parameter of the covariate
distribution. 
Our result extends a previous one in \cite{CS18}, and elucidates a rich structure in the phase transition phenomenon.
We believe that the phase transition also holds for multinomial response models such as the Poisson regression and the log-linear regression. 
See \cite{CM08, FR12} for further discussions.
We hope that this work will trigger further research towards a theory of hypothesis testing for generalized linear models with non-Gaussian covariates.

\section*{Acknowledgements}
Wenpin Tang gratefully acknowledges financial support through a startup grant at Columbia University.

\appendix
\section{Empirical Results}\label{appendix:results}

To study whether the MLE exists for a given pair of $(\gamma_0, \kappa)$ with some distribution for $R$, we generate the data by the mechanism described in Section \ref{s3}.
We fix the sample size at $n = 1000$, and vary $\gamma_0 \in \{0.01, 0.02, \ldots, 10.00\}$ and $\kappa = p/n \in \{0.005, 0.01, \ldots, 0.6\}$.
We generate $100$ such datasets, and for each dataset we solve the linear programming \eqref{eq:separable_lp} by checking whether there is a nonzero solution using the package \textit{CVXOPT}\footnote{https://cvxopt.org/documentation/index.html} in python.   

On the other hand, the optimization problem \eqref{eq:hmle} does not have a closed-form solution. 
Here we solve numerically this convex optimization. 
Using the same data generation mechanism but with a sample size $n = 4000$, we compute $h_{\text{MLE}}$ by \textit{CVXOPT} as well.
We repeat the procedure for $100$ times and take the average of these replicates as the reported $h_{\text{MLE}}$. 
Here we take $\gamma_0 \in \{0.5, 1.0, \ldots, 10.00\}$ and $\kappa = p/n \in \{0.02, 0.04, \ldots, 0.6\}$.

\begin{figure}[ht]
  \centering
  \begin{minipage}{\textwidth}
    \centering
    \includegraphics[width=  0.9\textwidth]{pt_link.png}
    \subcaption{$\hat{\mathbb{P}} (\text{MLE}~exists)$ for $\pmb{A} = \pmb{I}_p$}
  \end{minipage}\\
  \begin{minipage}{\textwidth}
    \centering
    \includegraphics[width=  0.9\textwidth]{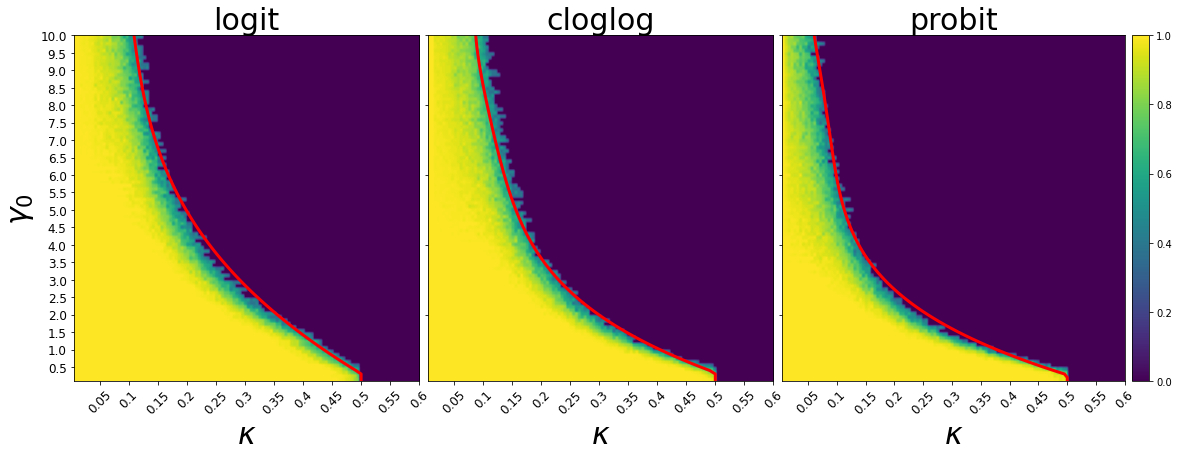}
    \subcaption{$\hat{\mathbb{P}} (\text{MLE}~exists)$ for $\pmb{A} \neq \pmb{I}_p$}
  \end{minipage}\\
  \begin{minipage}{\textwidth}
    \centering
    \includegraphics[width= 0.9\textwidth]{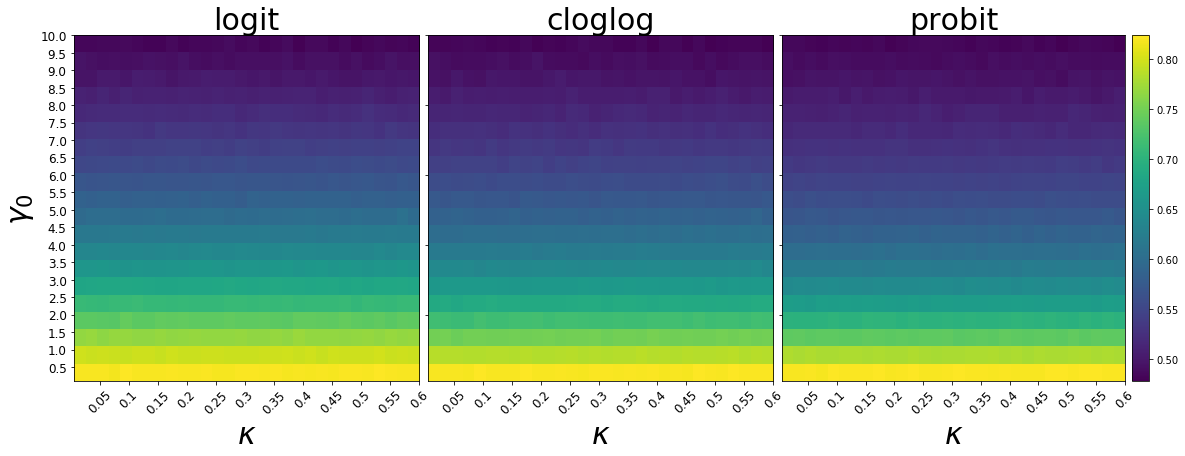}
    \subcaption{$h_{\text{MLE}}$}
  \end{minipage}
  \caption{Phase transition of the MLE existence for $R \sim $ chi distribution ($df = p$) with different link functions. Upper: The value of each grid in the heatmap is the proportion of times that the MLE exists across the $100$ replicates. The red curve is the theoretical $h_{\text{MLE}}$ boundary given by \eqref{eq:hmle}. Middle: The same setup as the upper figures, but $\pmb{A}$ is generated so that each entry $A_{ij}$ is sampled from the standard Gaussian distribution. To make sure it is full rank, we let $\pmb{A} \leftarrow \frac{1}{10000} \Vert \pmb{A} \Vert_2 \pmb{I}_p + \pmb{A}$. Bottom: The heatmap for the $h_{\text{MLE}}$. Each grid is the averaged value $h_{\text{MLE}}$ across $100$ replicates.}
  \label{fig:pt_chip_link_full}  
\end{figure}

\begin{figure}[ht]
  \centering
  \begin{minipage}{\textwidth}
    \centering
    \includegraphics[width=\textwidth]{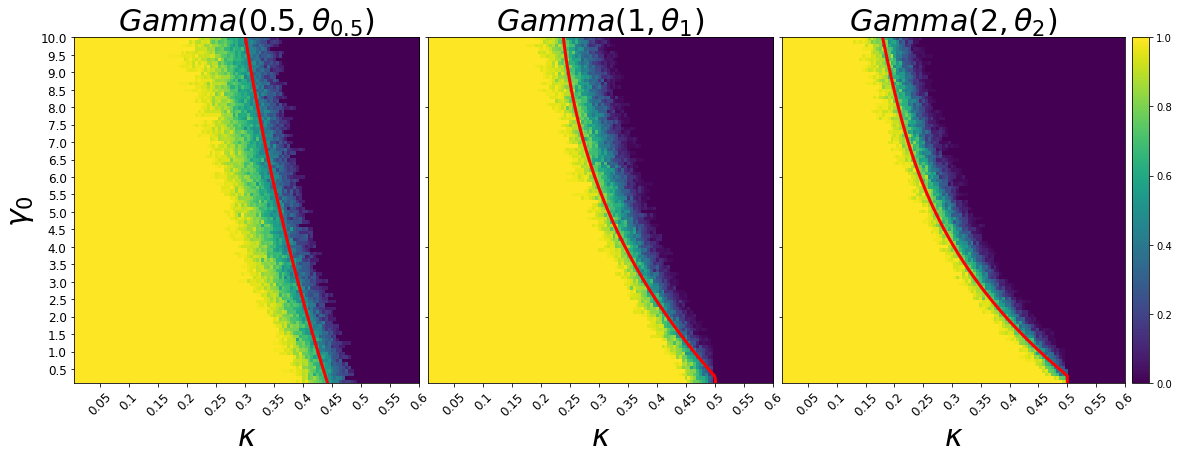}
    \subcaption{$\hat{\mathbb{P}} (\text{MLE}~exists)$}
  \end{minipage}\\
  \begin{minipage}{\textwidth}
    \centering
    \includegraphics[width=\textwidth]{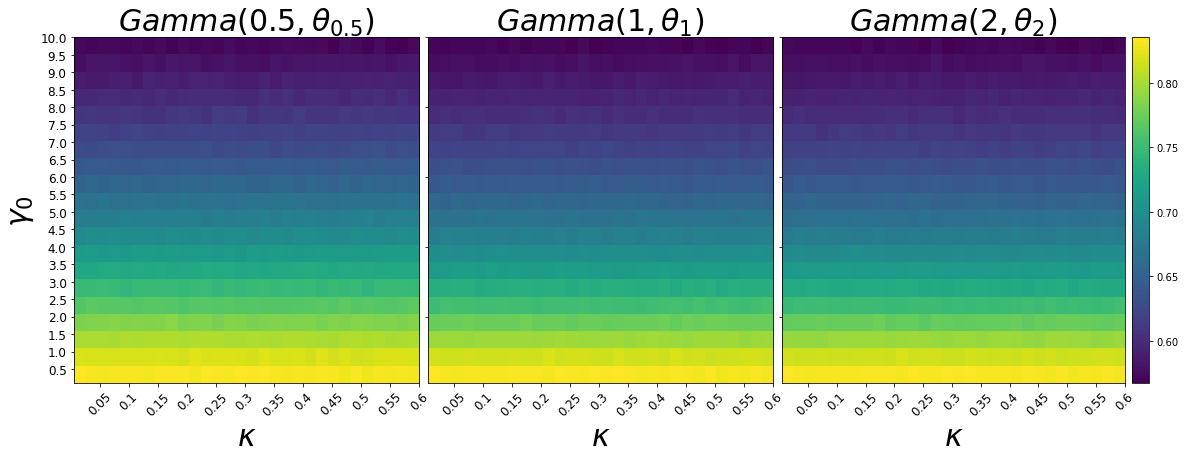}
    \subcaption{$h_{\text{MLE}}$}
  \end{minipage}
  \caption{Phase transition of the MLE existence for $R \sim$ Gamma distributions with different parameters and the logit link (see Section \ref{sec:sim_gamma}). Upper: The value of each grid in the heatmap is the proportion of times that the MLE exists across the $100$ replicates. The red curve is the theoretical $h_{\text{MLE}}$ boundary given by \eqref{eq:hmle}. Bottom: The heatmap for the $h_{\text{MLE}}$. Each grid is the averaged value $h_{\text{MLE}}$ across $100$ replicates.}
  \label{fig:pt_gamma_full}  
\end{figure}

\begin{figure}[ht]
  \centering
  \begin{minipage}{\textwidth}
    \centering
    \includegraphics[width=\textwidth]{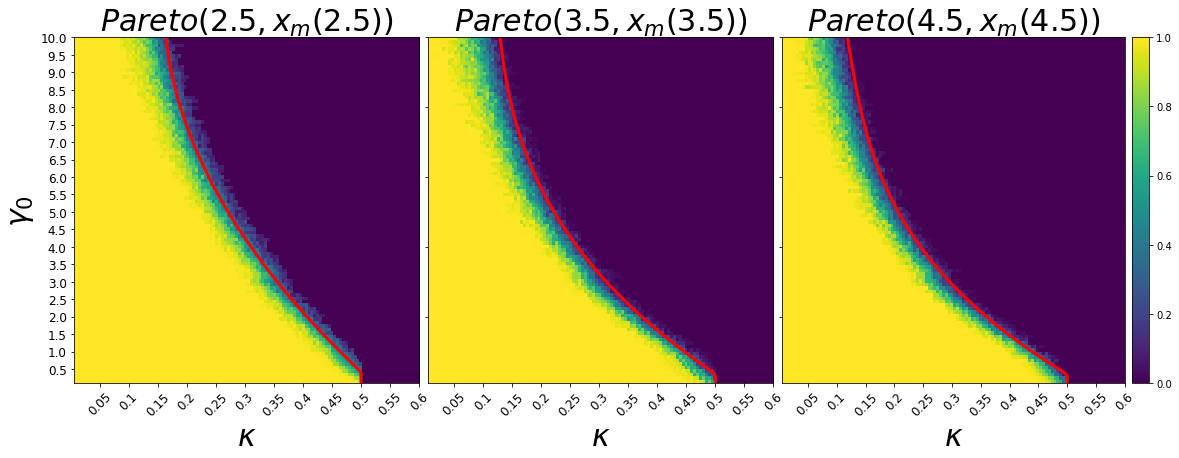}
    \subcaption{$\hat{\mathbb{P}} (\text{MLE}~exists)$}
  \end{minipage}\\
  \begin{minipage}{\textwidth}
    \centering
    \includegraphics[width=\textwidth]{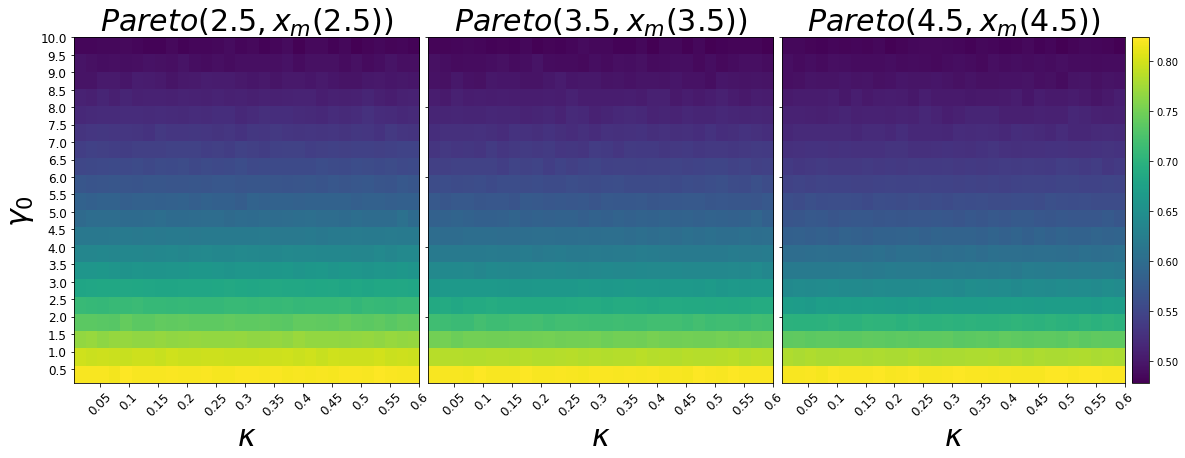}
    \subcaption{$h_{\text{MLE}}$}
  \end{minipage}
  \caption{Phase transition of the MLE existence for the Pareto distributions with different parameters and the logit link (see Section \ref{sec:sim_moment_tail}). Upper: The value of each grid in the heatmap is the proportion of times that the MLE exists across the $100$ replicates. The red curve is the theoretical $h_{\text{MLE}}$ boundary given by \eqref{eq:hmle}. Bottom: The heatmap for the $h_{\text{MLE}}$. Each grid is the averaged value $h_{\text{MLE}}$ across $100$ replicates.}
  \label{fig:pt_pareto_full}  
\end{figure}

\begin{figure}[ht]
  \centering
  \includegraphics[width=\textwidth]{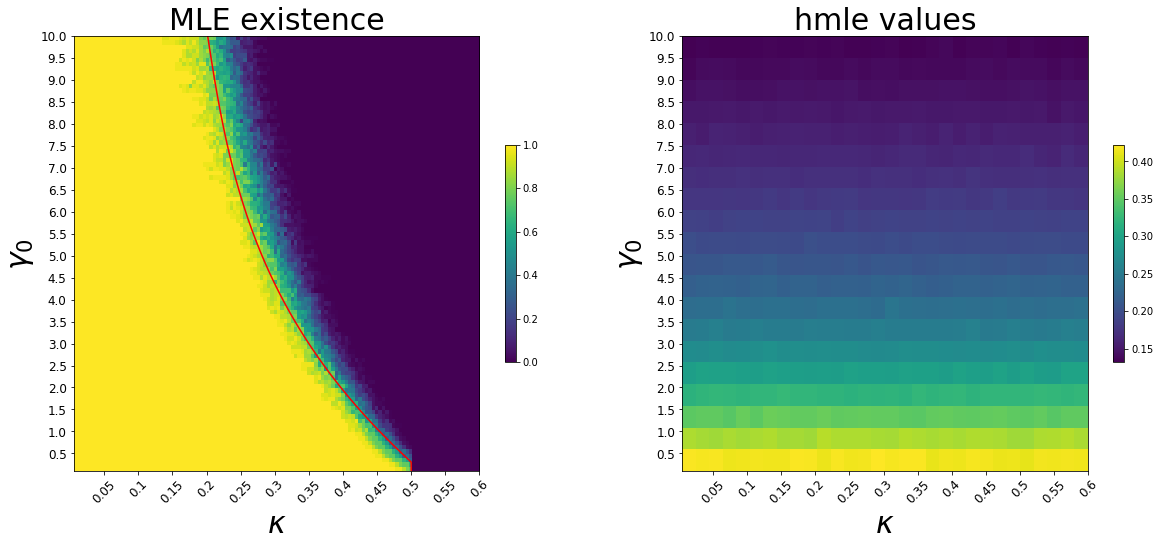}
  \caption{Phase transition of the MLE existence for the half-normal distribution and the logit link function. Left: The value of each grid in the heatmap is the proportion of times that the MLE exists across the $100$ replicates. The red curve is the theoretical $h_{\text{MLE}}$ boundary given by \eqref{eq:hmle}. Right: The heatmap for the $h_{\text{MLE}}$. Each grid is the averaged value $h_{\text{MLE}}$ across $100$ replicates.}
  \label{fig:pt_half_normal_full}  
\end{figure}

\begin{figure}[ht]
  \centering
  \includegraphics[width=\textwidth]{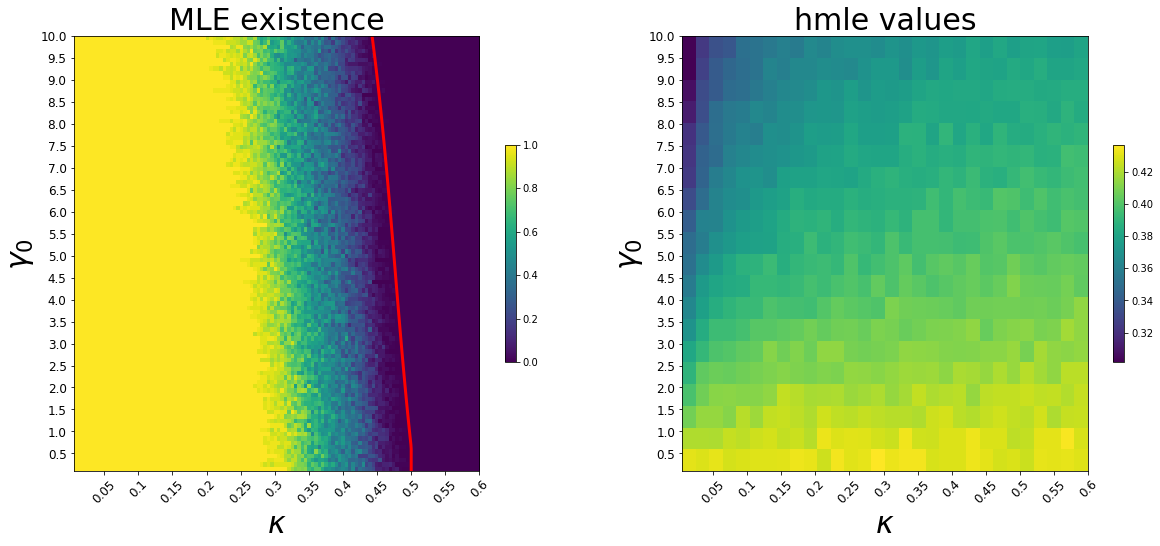}
  \caption{Phase transition of the MLE existence for the log-Normal distribution and the logit link function. Left: The value of each grid in the heatmap is the proportion of times that the MLE exists across the $100$ replicates. The red curve is the theoretical $h_{\text{MLE}}$ boundary given by \eqref{eq:hmle}. Right: The heatmap for the $h_{\text{MLE}}$. Each grid is the averaged value $h_{\text{MLE}}$ across $100$ replicates.}
  \label{fig:pt_log_normal_full}  
\end{figure}

\clearpage

\bibliographystyle{plain}
\bibliography{unique}

\end{document}